\newcommand{\up}[1]{{{}^{#1}\!}}
\def\latex/{{\protect\LaTeX}}
\def\latexe/{{\protect\LaTeXe}}
\def\amslatex/{{\protect\AmS-\protect\LaTeX}}
\def\tex/{{\protect\TeX}}
\def\amstex/{{\protect\AmS-\protect\TeX}}
\def\bibtex/{{Bib\protect\TeX}}
\def\makeindx/{\textit{MakeIndex}}
\theoremstyle{plain} 
\newtheorem{thm}{Theorem}[section]
\newtheorem{prop}[thm]{Proposition}
\newtheorem{cor}[thm]{Corollary}
\theoremstyle{definition}
\newtheorem{chunk}[thm]{\hspace*{-1.065ex}\bf}
\newtheorem{eg}[thm]{Example}
\newtheorem{conj}[thm]{Conjecture}
\newtheorem{rmk}[thm]{Remark}
\theoremstyle{remark}
\newtheorem*{claim*}{Claim}
\newcommand{\fm}{\mathfrak{m}}
\newcommand{\fp}{\mathfrak{p}}
\newcommand{\fq}{\mathfrak{q}}
\newcommand{\F}{\mathbb{F}}
\DeclareMathOperator{\e}{\operatorname{{e}}}
\newcommand{\iso}{\cong}
\DeclareMathOperator{\id}{id}
\DeclareMathOperator{\Tor}{Tor}
\DeclareMathOperator{\Ext}{Ext}
\DeclareMathOperator{\Hom}{Hom}
\DeclareMathOperator{\Tr}{\textnormal{Tr}}
\DeclareMathOperator{\Gdim}{\textnormal{G-dim}}
\DeclareMathOperator{\len}{\textup{length}}
\DeclareMathOperator{\Ass}{Ass}
\DeclareMathOperator{\Spec}{Spec}
\DeclareMathOperator{\pd}{pd}
\DeclareMathOperator{\End}{End}
\DeclareMathOperator{\grade}{grade}
\DeclareMathOperator{\depth}{depth}
\DeclareMathOperator{\X}{\operatorname{\mathsf{X}}}
\newcommand{\Min}{\textup{Min}}
\newcommand{\Ann}{\textup{Ann}}
\newcommand{\bb}{\left[ \begin{smallmatrix}}
\newcommand{\eb}{\end{smallmatrix} \right]}
\def\urltilda{\kern -.15em\lower .7ex\hbox{\~{}}\kern
  .04em}\def\urldot{\kern -.10em.\kern -.10em}\def\urlhttp{http\kern
  -.10em\lower -.1ex\hbox{:}\kern -.12em\lower 0ex\hbox{/}\kern
  -.18em\lower 0ex\hbox{/}}
\begin{document}

\title[Two theorems on the vanishing of Ext]{Two theorems on the vanishing of Ext}


\author[Olgur Celikbas]{Olgur Celikbas}
\address{Olgur Celikbas\\
School of Mathematical and Data Sciences\\
West Virginia University\\
Morgantown, WV 26506-6310, USA}
\email{olgur.celikbas@math.wvu.edu}

\author[Souvik Dey]{Souvik Dey}
\address{Souvik Dey\\ 
Charles University, Faculty of Mathematics and Physics, Department of Algebra, Sokolovsk´a 83, 186 75 Praha, Czech Republic.} 
\email{souvik.dey@matfyz.cuni.cz}

\author[Toshinori Kobayashi]{Toshinori Kobayashi}
\address{Toshinori Kobayashi \\ School of Science and Technology, Meiji University, 1-1-1 Higashi-Mita, Tama-ku, Kawasaki-shi, Kanagawa 214-8571, Japan}
\email{tkobayashi@meiji.ac.jp}

\author[Hiroki Matsui]{Hiroki Matsui}
\address{Hiroki Matsui\\ Department of Mathematical Sciences,
Faculty of Science and Technology,
Tokushima University,
2-1 Minamijosanjima-cho, Tokushima 770-8506, JAPAN}
\email{hmatsui@tokushima-u.ac.jp}

\author[Arash Sadeghi]{Arash Sadeghi}
\address{Arash Sadeghi\\ Dokhaniat 49179-66686, Gorgan, IRAN}
\email{sadeghiarash61@gmail.com}

\thanks{Souvik Dey was partly supported by the Charles University Research Center program No. UNCE/SCI/022 and a grant GA CR 23-05148S from the Czech Science Foundation}
\thanks{Toshinori Kobayashi was partly supported by JSPS Grant-in-Aid for JSPS Fellows 21J00567}
\thanks{Hiroki Matsui was partly supported by JSPS Grant-in-Aid for Early-Career Scientists 22K13894}
\subjclass[2020]{Primary 13D07; Secondary 13A35, 13C12, 3D05, 13H10}
\keywords{Burch modules, Ulrich modules, rigid modules, Frobenius endomorphism, Auslander-Reiten Conjecture, Huneke-Wiegand Conjecture, vanishing of Ext} 
\maketitle{}

\vspace{-0.2in}
\begin{center}
\small{\emph{To the memory of Shiro Goto}}
\end{center}

\begin{abstract} We prove two theorems on the vanishing of Ext over commutative Noetherian local rings. Our first theorem shows that there are no Burch ideals which are rigid over non-regular local domains. Our second theorem reformulates a conjecture of Huneke-Wiegand in terms of the vanishing of Ext, and highlights its relation with the celebrated Auslander-Reiten conjecture. We also discuss several consequences of our results, for example, about the rigidity of the Frobenius endomorphism in prime characteristic $p$ and a generalization of a result of Araya.
\end{abstract}

\section{Introduction}
Throughout, unless otherwise stated, $R$ denotes a commutative Noetherian local ring with unique maximal ideal $\fm$ and residue field $k$, and modules over $R$ are assumed to be finitely generated. 

An $R$-module $M$ is called \emph{rigid} if $\Ext^1_R(M,M)=0$. We say $M$ has \emph{rank} provided that there is an integer $r\geq 0$ such that $M_{\fp} \cong R_{\fp}^{\oplus r}$ for all $\fp \in \Ass(R)$, that is, for all associated primes $\fp$ of $R$.

In this paper we are concerned with the Gorenstein case of a long-standing conjecture of Huneke and Wiegand; see \cite[page 473]{HW1} and also \cite[8.6]{CeRo}. 

\begin{conj} \label{HWC} (Huneke-Wiegand) Let $R$ be a one-dimensional Gorenstein ring and let $M$ be a torsion-free $R$-module that has rank. If $M$ is rigid, then $M$ is free. \qed
\end{conj}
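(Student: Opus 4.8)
The statement I am asked to prove is the Huneke–Wiegand conjecture itself (Conjecture~\ref{HWC}); what I can realistically hope to do is not to prove it in full generality but to explain the natural line of attack that the paper is likely pursuing, namely a reduction to the vanishing of a single $\Ext$ module. So suppose $R$ is one-dimensional Gorenstein, $M$ is torsion-free with rank, and $M$ is rigid, i.e.\ $\Ext^1_R(M,M)=0$; I want to conclude $M$ is free. First I would reduce to the case that $R$ is a complete one-dimensional Gorenstein local domain: completing preserves the hypotheses (rank, torsion-freeness, rigidity, Gorensteinness, dimension), and freeness descends faithfully along $R\to\widehat R$; moreover a torsion-free module over a one-dimensional ring is a second syzygy, so after replacing $M$ by a suitable syzygy I may assume $M$ is maximal Cohen–Macaulay, hence a module over the (reduced, equidimensional) ring, and localizing at minimal primes lets me further assume $R$ is a domain with fraction field $K$.

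**The key reformulation.** The heart of the matter is to translate "$M$ is not free" into a structural statement that interacts well with rigidity. The standard device, going back to Huneke–Wiegand's own work and to Auslander's theory of torsion, is the exact sequence relating $M$, its Auslander dual $M^{\ast}=\Hom_R(M,R)$, and the torsion submodule of $M\tensor_R M^{\ast}$; over a one-dimensional Gorenstein ring one has $M^{\ast\ast}\cong M$ and a four-term exact sequence $0\to\Ext^1_R(\Tr M,R)\to M\to M^{\ast\ast}\to\Ext^2_R(\Tr M,R)\to 0$ which here degenerates, so the real content is packaged in $\Tor_1^R(M,M^{\ast})$ and $\Tor_i$ for $i\ge1$. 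The plan is: (i) observe $M$ free $\iff$ $M^{\ast}$ free $\iff$ $M\tensor_R M^{\ast}$ is torsion-free (this last equivalence is precisely the Huneke–Wiegand phenomenon for Gorenstein rings of dimension one); (ii) show that rigidity $\Ext^1_R(M,M)=0$ forces $\Ext^1_R(M,M^{\ast\ast})=0$ and hence, via $M\cong M^{\ast\ast}$ and the tensor-Ext spectral sequence / the adjunction $\Hom_R(M,M^{\ast\ast})\cong\Hom_R(M\tensor_R M^{\ast},R)^{\ast}$-type identities, that $M\tensor_R M^{\ast}$ is reflexive, hence torsion-free; (iii) conclude $M$ is free. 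Along the way I would use that over a one-dimensional Gorenstein ring $\Ext^1_R(M,M)=0$ already gives depth and rank information: if $M$ has rank $r$ then at the generic point $M_K$ is free of rank $r$, so $M\tensor_R M^{\ast}$ has rank $r^2$ and its torsion, if nonzero, has finite length — this is where the Gorenstein hypothesis (injective dimension considerations, the functor $\Hom_R(-,R)$ being a duality on MCM modules) does the real work in bounding things.

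**The main obstacle.** The genuinely hard step is (ii): extracting torsion-freeness of $M\tensor_R M^{\ast}$ from the single vanishing $\Ext^1_R(M,M)=0$. A priori rigidity is a statement about self-extensions, while the tensor product's torsion is measured by $\Tor_1^R(M,M^{\ast})$, and there is no formal implication in either direction without extra input. The bridge I expect to need is a delicate homological identity over Gorenstein rings — essentially that for a MCM module $M$ over a one-dimensional Gorenstein local ring one has a natural isomorphism $\Tor_1^R(M,M^{\ast})\cong\Ext^1_R(M,M^{\ast\ast})\cong\Ext^1_R(M,M)$ up to torsion-free/MCM corrections (this kind of identity follows from dualizing a free presentation of $M$ and using $(-)^{\ast}$-duality together with $\Ext^{\ge1}_R(M^{\ast},R)=0$). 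If such an identity holds, rigidity kills $\Tor_1^R(M,M^{\ast})$, so $M\tensor_R M^{\ast}$ is torsion-free, and then the Huneke–Wiegand mechanism for Gorenstein rings closes the argument. Making that identity precise — tracking exactly which terms are genuinely isomorphic versus isomorphic modulo free summands, and checking the Gorenstein/dimension-one hypotheses are used correctly rather than circularly — is where I anticipate the bulk of the technical effort, and it is also the point where, in full generality, the conjecture remains open; so I would expect the paper to either impose an additional hypothesis (e.g.\ $M$ Burch, or Ulrich, or $R$ a domain with an Ulrich ideal) or to content itself with the equivalence "$M$ rigid $\iff$ $\Ext^{\gg0}_R(M,M)=0$ and $M$ free" that ties the conjecture to Auslander–Reiten.
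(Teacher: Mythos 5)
You have correctly recognized that this statement is labeled a \emph{conjecture} rather than a theorem: the paper does not prove it, states explicitly that it is ``wide open in general, even for complete intersection rings of codimension two,'' and the terminal \verb+\qed+ is just the convention used to close the environment. There is therefore no ``paper's own proof'' to compare against. What the paper actually does is prove two \emph{partial} results, and your concluding guess about their nature is accurate: Theorem~\ref{thm1} establishes the conjecture under the additional hypothesis that $M$ is a Burch module, and Theorem~\ref{thm2} reformulates the conjecture as the statement that $\Ext^i_R(M,M)=0$ for $i=1,\ldots,d$ forces freeness over Gorenstein domains, which ties it to a strengthened Auslander--Reiten conjecture. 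On that level your meta-prediction is on target.

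However, the ``plan'' you sketch contains a circularity that is worth naming precisely. Your step (i) asserts the chain of equivalences ``$M$ free $\iff$ $M^{\ast}$ free $\iff$ $M\otimes_R M^{\ast}$ torsion-free,'' but the implication ``$M\otimes_R M^{\ast}$ torsion-free $\Rightarrow M$ free'' \emph{is} the Huneke--Wiegand conjecture (in its equivalent tensor form); stating it as an observation at the outset silently assumes what is to be proved. Conversely, the step you identify as ``the genuinely hard step,'' namely extracting torsion-freeness of $M\otimes_R M^{\ast}$ from $\Ext^1_R(M,M)=0$, is in fact the \emph{known} direction: the paper cites it in the introduction, and it follows from the Goto--Takahashi isomorphism recorded in~\ref{HJ}, which over a one-dimensional Gorenstein ring gives $\Ext^1_R(M,M)\cong\Ext^1_R(M\otimes_R M^{\ast},R)$, so that $\Ext^1_R(M,M)=0$ is literally equivalent to $M\otimes_R M^{\ast}$ being maximal Cohen--Macaulay, i.e.\ torsion-free. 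So the labor in your plan is distributed backwards: the ``bridge'' you expect to have to build already exists, and the place where you treat the matter as routine is exactly where the open problem lives. If you keep the reduction-to-tensor-form framing, the honest statement is that the open content is ``$\Tor_1^R(M,M^{\ast})=0$ and the tensor product torsion-free $\Rightarrow M$ free,'' and that is what remains intractable without extra structure such as the Burch condition imposed in Theorem~\ref{thm1}.
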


The rank and one-dimensional hypotheses are necessary for Conjecture \ref{HWC} which is wide open in general, even for complete intersection rings of codimension two; see, for example, \cite{CeRo}. On the other hand there are affirmative answers over several classes of rings and for quite a few examples of modules. For example, a result of Huneke-Wiegand \cite[3.1]{HW1}, in view of \cite[2.13]{CGTT}, implies that Conjecture \ref{HWC} is true over hypersurface rings. We refer the reader to \cite{Ce, CGTT,  CeRo, Micah, HWendo, CrRoSi, Lindo} for details and further examples concerning the conjecture.

The aim of this paper is twofold: we prove two theorems, namely Theorem \ref{thm1} and Theorem \ref{thm2}, and make progress on Conjecture \ref{HWC} from two different perspectives. As a byproduct, we also generalize a theorem of Araya which considers maximal Cohen-Macaulay modules over Gorenstein rings; see \ref{Tok}.


Dey and Kobayashi \cite[3.1]{SDTK} extended the definition of a Burch ideal \cite{LRT} to a Burch module: an $R$-module $M$ is said to be \emph{Burch} if it is an $R$-submodule of an $R$-module $T$ such that $\fm (M:_T \fm) \nsubseteq \fm M$. As Burch ideals are Burch modules, examples of Burch modules are abundant in the literature; see \ref{HWex}. Our first theorem establishes Conjecture \ref{HWC} for Burch modules and show that, over one-dimensional non-regular local domains, there are no ideals that are both Burch and rigid. In fact our result is more general and shows the following; see Theorem \ref{mainthm}.

\begin{thm} \label{thm1} Let $R$ be ring and let $M$ be a torsion-free $R$-module which has rank.  If $M$ is Burch and rigid, then $M$ is free, and $R$ is a field or a discrete valuation ring.  \qed
\end{thm}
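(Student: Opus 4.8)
The plan is to reduce everything to the local (in fact one-dimensional) case and then exploit the strong structural constraints that the Burch condition places on syzygies. First I would observe that the hypotheses localize: if $M$ is torsion-free with rank and both Burch and rigid, then for a suitable prime (obtained by cutting down to dimension one via a generic hyperplane section / prime avoidance, or by passing to $R_\fp$ at a minimal prime of the non-free locus) one retains a torsion-free module with rank that is rigid, while the Burch property is designed to be inherited or to force $R_\fp$ to be a DVR. So the crux is: \emph{over a one-dimensional local domain $R$ that is not a DVR, a torsion-free $R$-module with rank cannot be simultaneously Burch and rigid}. The tool I expect to be decisive is the result of Dey–Kobayashi \cite{SDTK}: a Burch module that is not free has the property that its syzygies eventually ``see'' the residue field $k$ as a direct summand of a high syzygy of $k$, or more precisely that sufficiently high syzygies of a Burch module are themselves Burch (in particular have $\fm$ as an associated submodule in the relevant sense). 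This is the lever that converts rigidity into a statement about $\Ext$ of $k$.

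The key steps, in order, are: (1) Reduce to $R$ a one-dimensional local domain, $M$ torsion-free with positive rank (the rank-zero case makes $M$ torsion, hence zero, hence free vacuously, so assume $\rk M = r \ge 1$), and assume for contradiction $R$ is not a DVR and $M$ is not free. (2) Replace $M$ by a high syzygy: since $\Ext^1_R(M,M)=0$ and we are over a domain, standard rigidity propagation (e.g. along short exact sequences $0 \to \syz M \to R^n \to M \to 0$, using that $\depth R = 1$ and $M$ is torsion-free hence a first syzygy) lets us assume $M$ itself is a sufficiently high syzygy, so $M$ is Burch \emph{and} $M$ has no free summands. (3) Apply the Dey–Kobayashi machinery: a Burch module without free summands over a non-regular ring has the property that $k$ is a direct summand of $\syz^2(M \oplus \text{something})$ or, in the precise form proved there, $\Tor$ and $\Ext$ against $M$ detect the residue field; concretely one gets that $\Ext^1_R(M, M)$ surjects onto (or contains) a copy of $\Ext^{i}_R(k,k)$-type data for appropriate $i$, forcing this to vanish. (4) Derive the contradiction: the vanishing of the relevant $\Ext$ of $k$ against $k$ (or the splitting it would entail) forces $R$ to be regular of dimension $\le 1$, i.e. a field or a DVR, contrary to assumption; hence $M$ must have been free and $R$ a DVR or field.

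The main obstacle is step (3): making the passage from ``$\Ext^1_R(M,M)=0$ with $M$ Burch'' to ``$k$-related $\Ext$ vanishes'' genuinely work. The Burch hypothesis as stated ($\fm(M :_T \fm) \not\subseteq \fm M$ inside some ambient $T$) is an \emph{ambient} condition, and one must first extract from it an intrinsic consequence about $M$ — typically that $\Hom_R(k, M/\fm M) \ne 0$ combines with $\fm(M:_T\fm)\not\subseteq \fm M$ to produce a nonzero map $k \to \syz^{-1}$ or a $k$-summand in $M \otimes_R (R/\fm)$-linked complexes. Once this is in hand, the known homological rigidity of Burch modules (every high syzygy of a Burch module is Burch, and Burch modules are ``$\Ext$-test'' modules in the sense that $\Ext^{\gg 0}_R(M, M) = 0 \Rightarrow \pd_R M < \infty$) closes the argument, because finite projective dimension plus $M$ torsion-free over a one-dimensional domain plus rigidity forces freeness by Auslander–Buchsbaum and a standard rigidity-of-free-resolutions argument, and then the Burch condition on a free module forces $R$ to be a DVR. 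Verifying the precise $\Ext$-bookkeeping — which syzygy index is needed, and that rigidity of $M$ propagates to the needed syzygy without losing the Burch property — is where the real work lies; I would expect the paper to cite \cite[3.1]{SDTK} and its companions to supply exactly the syzygy-stability and test-module statements that make this routine.
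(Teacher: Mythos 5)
Your proposal identifies the right high-level constraints (Burch is very restrictive, rigidity should combine with Burch to force freeness, and some Dey--Kobayashi lever is needed) but the actual mechanism you describe diverges from the paper's and contains steps that would not hold up.

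The paper's reduction is by passing to the $\fm$-adic completion, not by cutting down to a one-dimensional domain or localizing at a prime. This is important: the Burch condition is an \emph{ambient} condition (there is a $T$ with $\fm(M:_T\fm)\nsubseteq\fm M$), and what the paper verifies in \ref{preserve}(a) is precisely that this inequality survives completion. Your proposed generic hyperplane section or localization at a minimal prime of the non-free locus would need a separate argument to preserve the Burch and torsion-free hypotheses, and the latter is already subtle enough that the paper treats it carefully (\ref{preserve}(b)). The real reason for completing is to obtain a Henselian ring so the Krull--Schmidt theorem applies and endomorphism rings of indecomposable modules are local; that is the engine of the whole proof and is entirely absent from your outline.

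Your step (2), replacing $M$ by a high syzygy and asserting that rigidity propagates, is the most serious gap. In general $\Ext^1_R(M,M)=0$ does \emph{not} imply $\Ext^1_R(\syz M,\syz M)=0$; rigidity is not inherited by syzygies without additional vanishing of $\Ext^i(M,R)$. You cannot freely pass to a high syzygy. Similarly, your step (3) posits a surjection from $\Ext^1_R(M,M)$ onto $\Ext$-of-$k$ data, but the paper's actual use of the Burch condition is quite different and more delicate. By \ref{s2f33}(i), Burch with $\depth_R(M)\geq 1$ gives a non-zerodivisor $x$ on $M$ with $k$ a direct summand of $M/xM$. Decomposing $M$ into indecomposables (this is where Henselian/Krull--Schmidt is needed), $k$ is a summand of some $M_j/xM_j$. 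The crucial Lemma \ref{s2f1} then says: because $M_j$ is rigid, the short exact sequence $0\to M_j\xrightarrow{x} M_j\to M_j/xM_j\to 0$ yields $\End_{R/xR}(M_j/xM_j)\cong\End_R(M_j)/x\End_R(M_j)$, which is local; hence $M_j/xM_j$ is indecomposable, hence $M_j/xM_j\cong k$, hence $M_j$ is cyclic with $\fm M_j\cong M_j$. This is where rigidity is genuinely spent — on controlling the endomorphism ring of the quotient, not on a syzygy-propagation argument.

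Once the cyclic summand $N$ with $\fm N\cong N$ and $\grade_R(N)=0$ is in hand, freeness comes from a Levin--Vasconcelos type result (\ref{sonekle}, which is \cite[2.6(6)]{SDTK}): since $M$ is rigid, $\Ext^1_R(M,\fm N)=\Ext^1_R(M,N)=0$, and because $M$ has rank and $\grade(N)=0$, this forces $\pd_R(M)<1$. Then Burch plus finite projective dimension forces $R$ regular (\ref{s2f33}(ii)), and torsion-freeness has already given $\depth(R)\leq 1$. Your sketch gestures toward Burch modules being ``$\Ext$-test'' modules, which is in the neighborhood, but the actual test statement used is the sharper Levin--Vasconcelos inequality against $\fm N$ with $\grade(N)=0$, not vanishing of $\Ext(k,k)$ in high degree. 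In short, the two missing ideas are (a) completion so Krull--Schmidt and local endomorphism rings are available, and (b) Lemma \ref{s2f1}, where rigidity is used to keep $N/xN$ indecomposable; without these, the passage from ``$k$ is a summand of $M/xM$'' to ``some summand of $M$ is cyclic'' does not go through.
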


If $R$ and $M$ are as in Conjecture \ref{HWC}, it follows that $M$ is rigid if and only if $M\otimes_RM^{\ast}$ is torsion-free, where $M^{\ast}=\Hom_R(M,R)$; see \ref{HJ}(i). Hence, as a consequence of Theorem \ref{thm1}, we obtain:

\begin{cor} \label{corthm3} Let $R$ be a one-dimensional Gorenstein ring and let $M$ be an $R$-module which has rank. Assume $M$ and $M\otimes_RM^{\ast}$ are both torsion-free. If $M$ is Burch, then $R$ is regular and $M$ is free.  \qed
\end{cor}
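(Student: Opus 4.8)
The plan is to deduce Corollary \ref{corthm3} directly from Theorem \ref{thm1} together with the advertised characterization of rigidity in \ref{HJ}(i). First I would recall that $R$ is a one-dimensional Gorenstein ring, hence Cohen-Macaulay, so an $R$-module has rank precisely when it has constant rank at each associated (equivalently, each minimal) prime; since $M$ is assumed torsion-free and to have rank, it is in particular a torsion-free module with rank, putting us in the setting of Theorem \ref{thm1}. The remaining input is to translate the hypothesis ``$M\otimes_R M^{\ast}$ is torsion-free'' into ``$M$ is rigid.'' This is exactly the content of \ref{HJ}(i), which applies because $R$ is one-dimensional Gorenstein and $M$ has rank: under these hypotheses $\Ext^1_R(M,M)=0$ if and only if $M\otimes_R M^{\ast}$ is torsion-free.

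With that translation in hand, the proof is immediate: $M$ is torsion-free, has rank, is Burch by hypothesis, and is rigid by the equivalence just invoked. Theorem \ref{thm1} then yields that $M$ is free and that $R$ is a field or a discrete valuation ring; since $R$ has dimension one, it cannot be a field, so $R$ is a discrete valuation ring, in particular a regular local ring. This gives both conclusions of the corollary, namely that $R$ is regular and $M$ is free.

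The only point requiring a little care — and the one I expect to be the main (though minor) obstacle — is verifying that the hypotheses of \ref{HJ}(i) are genuinely met, i.e. that ``$M$ has rank'' in the sense defined in the introduction is the correct notion to invoke there, and that no extra torsion-free hypothesis on $M^{\ast}$ or on syzygies sneaks in. Over a one-dimensional Gorenstein ring $M^{\ast}=\Hom_R(M,R)$ is automatically a second syzygy, hence maximal Cohen-Macaulay and torsion-free, so this causes no trouble; and the rank of $M$ forces $M^{\ast}$ to have the same rank, so $M\otimes_R M^{\ast}$ has a well-defined rank and its torsion submodule is exactly the obstruction measured by $\Ext^1$. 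Thus the cited equivalence applies verbatim, and the corollary follows formally from Theorem \ref{thm1}. I would write the argument in three or four lines, citing \ref{HJ}(i) for the rigidity reformulation and Theorem \ref{thm1} for the conclusion, and noting explicitly that $\dim R = 1$ rules out the ``field'' alternative.
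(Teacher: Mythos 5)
Your proof is correct and follows the paper's intended argument essentially verbatim: translate the torsion-freeness of $M\otimes_R M^{\ast}$ into rigidity of $M$ via \ref{HJ} (applied over the one-dimensional Gorenstein ring, using that having rank makes $M$ locally free on the punctured spectrum), and then invoke Theorem \ref{thm1}. The remark that $\dim R=1$ rules out the field alternative is a fine refinement but is not even needed, since both fields and discrete valuation rings are regular.
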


If $R$ is a one-dimensional F-finite Cohen-Macaulay local ring of prime characteristic $p$, and if $\up{e}{R}$ is rigid for some $e\gg 0$, then it follows from \cite[6.10]{HD2} that $R$ is a discrete valuation ring. As a consequence of Theorem \ref{thm1}, we are able to extend this result and prove the following corollary in section 2; see also \ref{HWex}(vi) and Remark \ref{rmkFrob}.

\begin{cor}\label{corFrob} Let $R$ be an equi-dimensional reduced ring of prime characteristic $p$ with $\depth(R)=1$. Assume $R$ is F-finite, for example, R is excellent and k is perfect. If $\up{e}{R}$ is rigid for some $e\gg 0$, then $R$ is a discrete valuation ring. \qed
\end{cor}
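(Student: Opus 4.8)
The goal is to deduce Corollary~\ref{corFrob} from Theorem~\ref{thm1}, so the first step is to recognize the Frobenius pushforward $\up{e}{R}$ as a torsion-free Burch module that has rank, and then feed it into Theorem~\ref{thm1} to conclude $R$ is a DVR. Let me lay out the steps.

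First I would reduce to the one-dimensional complete local case. Since $R$ is equi-dimensional, reduced, with $\depth(R)=1$, every associated prime is a minimal prime and $\dim R = 1$; moreover $R$ is Cohen-Macaulay (depth equals dimension). F-finiteness is preserved under completion, and completion does not change depth, dimension, reducedness (for excellent or F-finite rings), or whether $\up{e}{R}$ is rigid (Ext commutes with the faithfully flat completion map applied to finitely generated modules). Being a DVR also descends from the completion. I should note the parenthetical claim that excellent plus $k$ perfect implies F-finite; this is a standard fact (Kunz) that I can cite rather than prove.

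Next, the heart is checking the hypotheses of Theorem~\ref{thm1} for $M = \up{e}{R}$, viewed as an $R$-module via the $e$-th iterate of Frobenius. (i) \emph{Torsion-free:} $R$ is reduced, hence so is $\up{e}{R}$ as a ring, and a reduced ring is torsion-free as a module over a reduced ring of the same total ring of fractions — more carefully, $R$ reduced and equi-dimensional forces every nonzerodivisor of $R$ to be a nonzerodivisor on $\up{e}{R}$, which one checks by localizing at the minimal primes. (ii) \emph{Has rank:} at each minimal prime $\fp$, $R_\fp$ is a field (since $R$ is reduced), and $\up{e}{R}_\fp$ is a finite extension of that field, so free of some rank $r_\fp$; F-finiteness plus equi-dimensionality can be used to see these ranks are all equal, or one simply invokes that $\up{e}{R}$ has constant rank $p^{e\cdot d}$-type formula over reduced equi-dimensional rings — in any case this is where I'd be slightly careful. (iii) \emph{Burch:} here I invoke \ref{HWex}(vi), referenced in the statement, which presumably records that Frobenius pushforwards are Burch modules (this follows from the Dey--Kobayashi machinery since $\up{e}{R}/\fm\up{e}{R}$ is large for $e \gg 0$ over a non-regular ring — and if $R$ is regular we are done anyway since a regular one-dimensional local ring is a DVR). (iv) \emph{Rigid:} this is the hypothesis.

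Finally, with all four hypotheses verified, Theorem~\ref{thm1} yields that $\up{e}{R}$ is free over $R$ and $R$ is a field or a DVR; since $\depth R = 1$ excludes the field case, $R$ is a DVR. The main obstacle I anticipate is item (iii), the Burch property: I want to make sure that for $e \gg 0$ the Frobenius pushforward genuinely fails to be "small" over a non-regular local ring, which is essentially Kunz's theorem in disguise (regularity $\iff$ $\up{e}{R}$ free), combined with the precise numerical criterion from \cite{LRT}/\cite{SDTK} that makes a module Burch. If \ref{HWex}(vi) already packages this, the corollary is then a short deduction; if not, the bulk of the work is unwinding the definition of Burch for $\up{e}{R}$ and showing $\fm(\up{e}{R} :_{\up{e}{R}} \fm) \not\subseteq \fm\,\up{e}{R}$ when $R$ is not a DVR.
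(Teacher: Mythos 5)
Your proposal is correct and follows the same route as the paper: verify that $\up{e}{R}$ is torsion-free, has rank for $e\gg 0$ (via reducedness, which makes each $R_\fp$ a field for $\fp\in\Ass(R)=\Min(R)$, and then Kunz \cite[2.3 and 2.4]{Kunz}), is Burch for $e\gg 0$ (\ref{HWex}(vi)), and then invoke Theorem~\ref{thm1}; the $\depth(R)=1$ hypothesis rules out the field case. The only difference is cosmetic: your preliminary passage to the complete local case is redundant, since that reduction is already carried out inside the proof of Theorem~\ref{mainthm}, and your speculations about unpacking the Burch condition are unnecessary given that \ref{HWex}(vi) is cited as a black box.
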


The commutative version of the celebrated Auslander-Reiten conjecture \cite{AuRe} claims that each $R$-module $M$ must be free if $\Ext^i_R(M,M)=\Ext^i_R(M,R)=0$ for all $i\geq 1$. It is known that, if Conjecture \ref{HWC} holds, then the Auslander-Reiten conjecture holds over each Gorenstein local domain (of arbitrary dimension); see \cite[8.6]{CeRo} for the details. The second theorem we prove generalizes this fact and shows that Conjecture \ref{HWC} implies a stronger conclusion over Gorenstein local domains:

\begin{thm} \label{thm2} The following conditions are equivalent:
\begin{enumerate}[\rm(a)]
\item Conjecture \ref{HWC} holds over each (one-dimensional Gorenstein local) domain.
\item Whenever $R$ is a $d$-dimensional Gorenstein domain and $M$ is a torsion-free $R$-module such that $\Ext^i_R(M,M)=0$ for all $i=1, \ldots, d$, it follows that $M$ is free. \qed
\end{enumerate} 
\end{thm}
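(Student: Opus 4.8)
The implication (b) $\Rightarrow$ (a) is the easy direction: given a one-dimensional Gorenstein local domain $R$ and a torsion-free rigid $R$-module $M$ that has rank, we have $\Ext^1_R(M,M)=0$, which is exactly the hypothesis of (b) with $d=1$; hence (b) forces $M$ free, which is the conclusion of Conjecture \ref{HWC}. So the entire content is in (a) $\Rightarrow$ (b), and that is where I would concentrate.

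**Strategy for (a) $\Rightarrow$ (b).** Assume Conjecture \ref{HWC} holds over all one-dimensional Gorenstein local domains. Let $R$ be a $d$-dimensional Gorenstein domain and $M$ torsion-free with $\Ext^i_R(M,M)=0$ for $i=1,\dots,d$. The plan is to reduce to dimension one by a prime avoidance / regular-sequence argument and then invoke the conjecture. Since $R$ is a domain, $M$ is torsion-free hence has positive depth (after a harmless reduction; any torsion-free module over a domain embeds in a free module, so $\depth M \geq 1$ when $d\geq 1$). First I would choose an element $x \in \fm$ that is regular on $R$ and on $M$ simultaneously (possible since both have positive depth and $R$ is a domain, so $\fm$ is not contained in the finite union of associated primes of $R$, $M$, and $R/\!(\text{anything forced}))$. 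Passing to $\overline R = R/xR$, which is a $(d-1)$-dimensional Gorenstein \emph{local} ring, and $\overline M = M/xM$, the key technical point is that the Ext-vanishing descends: the standard change-of-rings spectral sequence (or the long exact sequence coming from $0 \to M \xrightarrow{x} M \to \overline M \to 0$) gives $\Ext^i_{\overline R}(\overline M, \overline M) = 0$ for $i=1,\dots,d-1$, using that $x$ is $M$-regular and that $\Ext^i_R(M,M)$ vanishes in the relevant range. Iterating $d-1$ times produces a one-dimensional Gorenstein local ring $R'$ and an $R'$-module $M'$ with $\Ext^1_{R'}(M',M')=0$, i.e. $M'$ is rigid.

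**Getting back up to $M$ free.** At the bottom, $R'$ is a one-dimensional Gorenstein local ring but perhaps not a domain; however $R'$ is reduced only if we arranged the regular sequence carefully — actually Conjecture \ref{HWC} as stated requires a one-dimensional Gorenstein \emph{ring}, not necessarily a domain, so that is fine as long as $M'$ still has rank and is torsion-free over $R'$. Both persist: a regular element of $R$ avoiding the associated primes of $M$ keeps $M/xM$ torsion-free over $R/xR$ when $R/xR$ has no embedded primes (Gorenstein, hence Cohen–Macaulay, so unmixed), and rank is preserved under such reduction. So the conjecture yields $M'$ free over $R'$. Finally, I would climb back up: freeness of $M/(x_1,\dots,x_{d-1})M$ over $R/(x_1,\dots,x_{d-1})R$ together with the fact that $(x_1,\dots,x_{d-1})$ is $M$-regular forces $M$ to be free over $R$ by the local criterion for flatness / a standard lifting-of-freeness argument (if $M/\mathbf{x}M$ is $R/\mathbf{x}R$-free and $\mathbf{x}$ is a regular sequence on both $R$ and $M$, then $\Tor_1^R(M,R/\mathbf x)=0$ forces $M$ free). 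This completes (a) $\Rightarrow$ (b).

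**Main obstacle.** The delicate step is the descent of Ext-vanishing under the quotient by one regular element while keeping exactly the right index range, and simultaneously maintaining torsion-freeness and the rank condition on the reduced module — in particular ensuring that after cutting down we really land in the hypotheses of Conjecture \ref{HWC} (one-dimensional Gorenstein, module torsion-free with rank). The count "$i=1,\dots,d$ upstairs $\leadsto$ $i=1,\dots,d-1$ downstairs" must be tight, and one must be careful that $x$ is chosen regular on $M$ (not just on $R$) so that the short exact sequence $0\to M\xrightarrow{x}M\to M/xM\to 0$ is available and the connecting maps in $\Ext^*_R(M,-)$ behave as needed; I would also double-check that $\Ext^i_R(M,M/xM)$ relates correctly to $\Ext^i_{R/xR}(M/xM,M/xM)$, which is where the Rees-type change of rings $\Ext^i_{R/xR}(N,N)\cong\Ext^i_R(M,N)$ for $N=M/xM$ (valid since $x$ is $M$-regular and $N$ is an $R/xR$-module) does the work. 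Everything else is routine homological bookkeeping.
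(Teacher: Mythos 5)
Your direction (b) $\Rightarrow$ (a) is fine and matches the paper (the translation between rigidity and torsion-freeness of $M\otimes_R M^\ast$ is exactly \ref{HJ}). The direction (a) $\Rightarrow$ (b), however, cannot be carried out by the reduction-modulo-a-regular-element scheme you propose, and this is where the paper's argument diverges completely from yours.

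There are three concrete problems with cutting by $x\in\fm$ that is $R$- and $M$-regular. First, torsion-freeness does not descend. The hypothesis of (b) only guarantees $\depth_R(M)\geq 1$; if $\depth_R(M)=1$ and $d\geq 2$, then $\depth_R(M/xM)=0$, so $\fm/xR\in\Ass_{R/xR}(M/xM)$, while $\fm/xR$ is not a minimal prime of the $(d-1)$-dimensional Cohen--Macaulay ring $R/xR$. Thus $M/xM$ fails to be torsion-free over $R/xR$ already at the first step. Nothing in the hypotheses forces $M$ to be maximal Cohen--Macaulay at the outset; indeed, proving that is essentially the whole point. Second, rank does not descend. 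Over $R/xR$ the associated primes are the minimal ones, which correspond to the height-one primes $\fp\supseteq (x)$ of $R$; to have $(M/xM)_{\fp}\cong (R/xR)_{\fp}^{\oplus r}$ you would need $M_{\fp}$ free over $R_{\fp}$ for all such $\fp$, and this cannot be arranged by prime avoidance since the set of height-one primes where $M$ fails to be free may be infinite. Third, even if those two issues were fixed, $R/xR$ is generally not a domain, and condition (a) only grants Conjecture~\ref{HWC} over one-dimensional Gorenstein \emph{domains}; you cannot invoke it at the bottom of the tower. (Your Ext-index bookkeeping and the lifting of freeness via $\Tor_1^R(M,R/\mathbf{x})=0$ are correct, but these are the easy parts.)

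The paper avoids all three obstacles by taking a different route. Instead of cutting down, it localizes (which preserves the domain property and so keeps condition (a) applicable), and it passes from $M$ to a maximal Cohen--Macaulay (totally reflexive) approximation $0\to Y\to X\to M\to 0$ via Proposition~\ref{AR1}; the Ext-vanishing transfers to $X$. Then the Goto--Takahashi duality (\ref{GT}) converts $\Ext^{d-1}_R(X,X)=\Ext^{d}_R(X,X)=0$ into $\depth_R(X\otimes_R X^\dagger)\geq 2$, i.e.\ $X\otimes_R X^\dagger$ satisfies $(S_2)$. Finally, Auslander's theorem (\ref{Aus}) together with an induction that uses the hypothesis only over one-dimensional localizations shows $X$ is free, whence $\pd_R(M)<\infty$ and $M$ is free by \ref{mat}. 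The MCM approximation step is precisely what replaces the failed ``cut down and hope torsion-freeness and rank survive'' step in your plan.
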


In view of the foregoing discussion, Theorem \ref{thm2} yields the following observation:

\begin{cor} The Auslander-Reiten conjecture holds over each Gorenstein local domain (of arbitrary dimension) if the following condition holds: Whenever $R$ is a $d$-dimensional Gorenstein domain and $M$ is a torsion-free $R$-module such that $\Ext^i_R(M,M)=0$ for all $i=1, \ldots, d$, it follows that $M$ is free. \qed
\end{cor}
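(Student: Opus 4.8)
The displayed hypothesis is exactly condition (b) of Theorem \ref{thm2}, and the plan is to deduce the Auslander--Reiten conjecture over a Gorenstein local domain directly from it; no appeal to the equivalence of Theorem \ref{thm2} is actually needed, though one could instead combine Theorem \ref{thm2} with the observation recalled just before it that Conjecture \ref{HWC} implies the Auslander--Reiten conjecture over Gorenstein local domains. So I would fix a Gorenstein local domain $R$ of dimension $d$ and an $R$-module $M$ with $\Ext^i_R(M,M)=\Ext^i_R(M,R)=0$ for all $i\geq 1$, and the goal is to bring these hypotheses into the precise shape required by the displayed condition and conclude $M$ is free.

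First I would argue that $M$ is maximal Cohen--Macaulay, using only $\Ext^i_R(M,R)=0$ for $i=1,\dots,d$. Over the Gorenstein ring $R$ every finitely generated module has finite G-dimension, which by the Auslander--Bridger formula equals $\depth R-\depth_R M$ and hence is at most $\depth R=d$; and a module of finite G-dimension $n$ satisfies $\Ext^n_R(M,R)\neq 0$ and $\Ext^i_R(M,R)=0$ for $i>n$. Thus if $\Gdim_R M$ were positive it would be some $g$ with $1\leq g\leq d$ and $\Ext^g_R(M,R)\neq 0$, contradicting the hypothesis; so $\Gdim_R M=0$, i.e.\ $M$ is totally reflexive, and in particular $\depth_R M=d$, so $M$ is maximal Cohen--Macaulay.

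Next I would check that $M$ is torsion-free, which over the domain $R$ is automatic once we know $\depth_R M=d\geq 1$: if the torsion submodule $tM$ were nonzero it would be annihilated by a nonzero element of $R$, so any $\fp\in\Ass(tM)\subseteq\Ass(M)$ would be a nonzero prime, forcing $\depth_R M\leq\dim R/\fp\leq d-1$, a contradiction. (Alternatively, total reflexivity realizes $M$ as a submodule of $\Hom_R(M^{*},R)$, which is torsion-free over a domain.) Now $M$ is a torsion-free module over the $d$-dimensional Gorenstein domain $R$ with $\Ext^i_R(M,M)=0$ for $i=1,\dots,d$, so the displayed condition applies verbatim and gives that $M$ is free, which is exactly the conclusion asserted by the Auslander--Reiten conjecture for $R$.

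The only step carrying any content is the first one, namely the passage from vanishing of $\Ext^i_R(M,R)$ in the range $1\leq i\leq\dim R$ to $M$ being maximal Cohen--Macaulay over the Gorenstein ring $R$; this is classical Auslander--Bridger theory, so I expect no genuine obstacle --- the corollary is essentially a formal consequence of the displayed condition once the Ext-against-$R$ vanishing is interpreted correctly.
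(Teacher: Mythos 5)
Your proposal is correct, but it reaches the conclusion by a genuinely different route than the paper. The paper obtains this corollary as an immediate consequence of Theorem \ref{thm2} together with the fact recalled just before it (from \cite[8.6]{CeRo}) that Conjecture \ref{HWC} implies the Auslander--Reiten conjecture over Gorenstein local domains: condition (b) implies condition (a) by the equivalence, and (a) implies the Auslander--Reiten statement by the cited result. You instead deduce the Auslander--Reiten conjecture directly from the displayed condition: the vanishing of $\Ext^i_R(M,R)$ for $1\leq i\leq d$, together with finiteness of G-dimension over the Gorenstein ring and the Auslander--Bridger formula, forces $M$ to be totally reflexive, hence maximal Cohen--Macaulay, hence torsion-free over the domain (your depth argument via $\depth_R M\leq\dim R/\fp$ for $\fp\in\Ass M$ is fine, as is the alternative via $M\cong M^{**}$), and then the displayed condition applies verbatim. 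Both arguments are sound; what yours buys is self-containment --- it uses only classical G-dimension theory and never invokes the one-dimensional Huneke--Wiegand statement, the tensor-product criterion \ref{HJ}, or the external reference \cite[8.6]{CeRo} --- while the paper's route is essentially a one-line observation given the machinery already assembled and makes the conceptual chain (b) $\Rightarrow$ Conjecture \ref{HWC} $\Rightarrow$ Auslander--Reiten explicit. The only minor points worth flagging are the trivial edge cases $M=0$ and $d=0$ (a zero-dimensional Gorenstein domain is a field), which your argument handles implicitly.
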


Let us note the following result concerning Theorem \ref{thm2}: if $R$ is a $d$-dimensional Gorenstein local ring with $d\geq 1$, $M$ is a torsion-free $R$-module such that $M_{\fp}$ is free over $R_{\fp}$ for all prime ideals $\fp$ of height at most one, and $\Ext_R^{i}(M, M) = 0$ for $i = 1, \ldots, d-1$, then $M$ is free; see Corollary~\ref{conse}.

The main ingredient of the proof of Theorem \ref{thm2} is Proposition \ref{AR1}. A byproduct of that proposition, along with a result of Kimura \cite{KK}, allows us to prove the following in the appendix.

\begin{prop} \label{corAson} Let $R$ be a ring such that $d=\depth(R)\geq 1$ and let $M$ be an $R$-module. Assume the following hold:
\begin{enumerate}[\rm(i)]
\item $\pd_{R_{\fp}}(M_{\fp})<\infty$ for all $\fp \in \Spec(R)-\{\fm\}$.
\item $M$ is torsion-free.
\item $\Gdim_R(M)<\infty$ 
\end{enumerate}
If $\Ext^{d-1}_R(M,M)=0$, then $\pd_R(M)\leq d-2$.
\end{prop}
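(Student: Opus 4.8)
The plan is to reduce the statement to the assertion that $M$ has finite projective dimension, to obtain that assertion from the byproduct of Proposition~\ref{AR1} together with Kimura's theorem \cite{KK}, and finally to read off the sharp bound $\pd_R(M)\le d-2$ by a short argument with the Auslander-Buchsbaum formula. First I would clear away trivialities. We may assume $M\neq 0$, since otherwise $\pd_R(M)=-\infty$ and there is nothing to prove. If $d=1$, the hypothesis $\Ext^{d-1}_R(M,M)=0$ says $\Hom_R(M,M)=0$, which forces $M=0$ because the identity map is a nonzero element; so from now on $d\ge 2$. Also, since $M$ is torsion-free and $\depth(R)=d\ge 1$, the maximal ideal $\fm$ is not associated to $M$: an associated prime of a torsion-free module consists of zerodivisors of $R$ and hence is contained in some associated prime of $R$, so if it were $\fm$ it would equal an associated prime of $R$, contradicting $\depth(R)\ge 1$; therefore $\depth_R(M)\ge 1$.

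The key step is to prove $\pd_R(M)<\infty$, and this is where hypotheses (i)--(iii) and the vanishing $\Ext^{d-1}_R(M,M)=0$ are all used together. The module $M$ is torsion-free, is locally of finite projective dimension on the punctured spectrum by (i), and satisfies $\Ext^{d-1}_R(M,M)=0$; I would feed this into the byproduct of Proposition~\ref{AR1} to land in a situation --- such as $M$ being maximal Cohen-Macaulay, or a sufficiently high syzygy module, or having $\Ext^j_R(M,R)=0$ for an appropriate $j$ --- to which the result of Kimura \cite{KK} applies, and then combine the outcome with $\Gdim_R(M)<\infty$ to obtain $\pd_R(M)<\infty$. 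I expect this to be the main obstacle: only a single self-Ext module, $\Ext^{d-1}_R(M,M)$, is assumed to vanish, rather than a whole range of them, so some care is needed to arrange that Proposition~\ref{AR1} and \cite{KK} still apply and that conditions (i)--(iii) supply exactly their hypotheses.

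With $\pd_R(M)=p<\infty$ in hand it remains to show $p\le d-2$. By the Auslander-Buchsbaum formula, $p=\depth(R)-\depth_R(M)=d-\depth_R(M)\le d-1$, using $\depth_R(M)\ge 1$. Suppose $p=d-1$; pick a minimal free resolution $0\to F_p\xrightarrow{\partial}F_{p-1}\to\cdots\to F_0\to M\to 0$ and write $F_p\cong R^{\oplus b}$ with $b\ge 1$. Minimality forces the entries of $\partial$ into $\fm$, so the image of $\Hom_R(F_{p-1},M)\to\Hom_R(F_p,M)\cong M^{\oplus b}$ lies in $\fm M^{\oplus b}$, and hence $\Ext^p_R(M,M)$ surjects onto $(M/\fm M)^{\oplus b}$, which is nonzero since $M\neq 0$. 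This contradicts $\Ext^{d-1}_R(M,M)=\Ext^p_R(M,M)=0$, so $p\le d-2$, completing the proof.
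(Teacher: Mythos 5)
Your overall plan — reduce to showing $\pd_R(M)<\infty$ via Proposition~\ref{AR1} and Kimura's theorem, then sharpen to $\pd_R(M)\le d-2$ with Auslander--Buchsbaum and a self-Ext nonvanishing argument — is the right one, and it matches the paper's strategy. Your preliminary reductions (to $d\ge 2$, $M\neq 0$, $\depth_R(M)\ge 1$) and your final step are correct; indeed your minimal-free-resolution argument is just a direct proof of the fact recorded in~\ref{mat}, which the paper simply cites.

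However, the middle step is a genuine gap: you say only that you ``would feed this into the byproduct of Proposition~\ref{AR1} to land in a situation\ldots to which the result of Kimura \cite{KK} applies,'' and you explicitly concede that you do not know how to arrange the hypotheses. The missing argument is short but precise. Because $\depth_R(M)\ge 1$ and $\Gdim_R(M)<\infty$, the Auslander--Bridger formula gives $\Gdim_R(M)\le d-1$, so both hypotheses (a) and (b) of Proposition~\ref{AR1} are met. That proposition then produces a short exact sequence $0\to Y\to X\to M\to 0$ in which $X$ is \emph{totally reflexive}, locally free on $\Spec(R)-\{\fm\}$, $\pd_R(Y)\le d-2$, and (by part~(iv) of Proposition~\ref{AR1}, since $\Ext^{d-1}_R(M,M)=0$) also $\Ext^{d-1}_R(X,X)=0$. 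The crucial observation you were circling around is that total reflexivity of $X$ hands you $\Ext^i_R(X,R)=0$ for \emph{all} $i\ge 1$ for free — in particular for $i=1,\ldots,2d+1$ — so \ref{KK}(i) applies verbatim with the single vanishing $\Ext^{d-1}_R(X,X)=0$, yielding that $X$ is free. Hence $\pd_R(M)\le \pd_R(Y)+1\le d-1$, and your final argument then rules out $\pd_R(M)=d-1$. Without this identification of why Kimura's theorem applies (namely, that the needed range of $\Ext$-vanishing against $R$ comes automatically from total reflexivity, not from any additional hypothesis on $M$), the proof is incomplete.
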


Proposition \ref{corAson} was initially proved by Araya \cite{Ar} for the case where $R$ is Gorenstein and $M$ is maximal Cohen-Macaulay; see \ref{G} and \ref{Tok}. We also prove a variation of Proposition \ref{corAson} by replacing the finite Gorenstein dimension hypothesis on $M$ with the assumption of the vanishing of $\Ext^i_R(M,R)$ for all $i=d, \ldots, 2d+1$ under mild additional conditions; see Proposition \ref{mt} and Corollary \ref{corneww}.
%

\section{Proof of Theorem \ref{thm1}} 

In this section we prove Theorem \ref{thm1}. First we recall some examples of Burch ideals and modules, and prepare auxiliary results which play an important role in the proof of the theorem. 

\begin{chunk}\label{s2f3} Let $M$ be an $R$-module. Then $M$ is said to be a \emph{Burch} $R$-submodule of an $R$-module $T$ provided that $\fm (M:_T \fm) \nsubseteq \fm M$; if such a module $T$ exists, we simply call $M$ a Burch $R$-module; see \cite[3.1]{SDTK}. Note that, by definition, a Burch module is nonzero.
\end{chunk}

We need the following properties in the sequel:

\begin{chunk}\label{s2f33} Let $M$ be an $R$-module. 
\begin{enumerate}[\rm(i)] 
\item If $\depth_R(M)\geq 1$, then $M$ is Burch if and only if there is an $x \in \fm$ such that $x$ is a non zero-divisor on $M$ and $k$ is a direct summand of $M/xM$; see \cite[3.6 and 3.9]{SDTK}.
\item If $M$ is Burch and $\pd_R(M)<\infty$ (or $\id_R(M)<\infty$), then $R$ is regular; see \cite[3.19]{SDTK}. \qed
\end{enumerate}
 \end{chunk}

There are many examples of Burch ideals and modules. Here we record a few of them:

\begin{eg} If $R=k[\![t^4, t^5, t^6 ]\!]$ and $I=(t^{17}, t^{19}, t^{20})$, or $R=k[\![x,y ]\!]$ and $I=(x^5, x^3y, xy^3, y^5)$, then $I$ is Burch; see \cite[4.4 and 4.5]{CelKob} and also cf. Example \ref{excompare}.
\end{eg}

\begin{chunk} \label{HWex} Let $I$ be an ideal of $R$ and let $M$ be an $R$-module.
\begin{enumerate}[\rm(i)]
\item If $\depth(R)\geq 1$ and $I$ is integrally closed, then $I$ is Burch; see \cite[2.2(4)]{CelKob}.
\item If $R$ is not a field, $|k|=\infty$, $\depth(R/I)=0$, and $I$ is integrally closed, then $I$ is Burch; see \cite[4.6]{GS}. 
\item If $M$ is an $R$-submodule of an $R$-module $T$ such that  $(\fm M:_T \fm) \subseteq M$ and $\depth_R(T/M)=0$, then $M$ is a Burch submodule of $T$; see \cite[4.3]{SDTK}. 
\item If $\fm M \neq 0$, for example, if $\depth_R(M)\geq 1$, then $\fm M$ is Burch; see \cite[3.4]{SDTK}.
\item If $M$ is a Burch $R$-submodule of an $R$-module $T$, then $\big(M+xT[x]\big)_{(\fm, x)}$ is a Burch $S$-submodule of the $S$-module $T[x]_{(\fm, x)}$, where $S=R[x]_{(\fm, x)}$; see \cite[4.8(2)]{GS}. 
\item If $R$ is of prime characteristic $p$, $F$-finite, and $\depth(R)=1$, then $\up{e}{R}$ is Burch for all $e\gg 0$ (here $\up{e}{R}$ denotes the $R$-module whose underlying abelian group is $R$, and the $R$-action on $\up{e}{R}$ is given by $r \cdot x=r^{p^e}x$ for $r\in R$ and $x\in \up{e}{R}$); see \ref{s2f33}(i) and \cite[3.3]{YuTak}.
\end{enumerate}
\end{chunk} 
\begin{chunk} \label{Ul} A Cohen-Macaulay $R$-module $N$ is called \emph{Ulrich} \cite[2.1]{UlrichGTT} if the minimal number of generators $\mu_R(N)$ of $N$ equals the (Hilbert-Samuel) multiplicity $\e_R(N)$ of $N$. Here $$\e_R(N)= t! \cdot \lim_{n \to \infty} \dfrac{\len_R(N/\fm^n N)}{n^{t}} \text {, where } t=\dim_R(N).$$

Ulrich modules were initially defined in \cite{Ulrish} as maximally generated maximal Cohen-Macaulay modules. Such modules were studied extensively in the literature; see, for example, \cite{GotoUlrich} for details. 

Let $N$ be a one-dimensional Cohen-Macaulay $R$-module. It follows from \cite[4.8 and 5.2]{SDTK} that, if $N$ is Ulrich, or equivalently, $N\cong \fm N$, then $N$ is Burch. On the other hand, if $N$ is Burch, then it does not need to be Ulrich. For example, if $R$ is a one-dimensional Cohen-Macaulay ring which does not have minimal multiplicity, that is, $\e(R)>\mu_R(\fm)$, and $N=\fm$, then $N$ is Burch but not Ulrich. Similarly, if $R$ is a one-dimensional non-regular Cohen-Macaulay ring and $N=R\oplus \fm$, then $N$ is Burch, but it is not an Ulrich $R$-module (since $R$ is not Ulrich, or equivalently, since $R$ is not regular). \qed 
\end{chunk}

The following observation plays an important role for the proof of Theorem \ref{mainthm}.

\begin{chunk} \label{s2f1} Let $R$ be a Henselian ring, $N$ an indecomposable rigid $R$-module, and let $x\in \fm$ be a non zero-divisor on $N$. Then  $N/xN$ is an indecomposable $R$-module. Therefore, if there is a nonzero cyclic $R$-module which is a direct summand of $N/xN$ as an $R$-module, then $N$ is cyclic.
\end{chunk}

\begin{proof} Note, as $\Ext^1_R(N,N)=0$, the short exact sequence $0 \to N \xrightarrow{x} N \to N/xN \to 0$ gives the short exact sequence $0 \to \Hom_R(N,N) \xrightarrow{x} \Hom_R(N,N) \to \Hom_R(N, N/xN) \to 0$. This yields an isomorphism $\Hom_R(N, N/xN) \cong \End_R(N)/x\End_R(N)$. It follows, since $\Hom_R(N, N/xN) \cong \End_{R/xR}(N/xN)$, that there is a ring isomorphism $\End_{R/xR}(N/xN) \cong \End_R(N)/x\End_R(N)$. Note, since $R$ is Henselian and $N$ is indecomposable, we know that $\End_R(N)$ is a local ring; see \cite[1.8]{TheBook}. So we see that $\End_{R/xR}(N/xN)$ is a local ring. Thus $N/xN$ is an indecomposable $R$-module. 

If there is a nonzero cyclic $R$-module, say $L$, which is a direct summand of $N/xN$ as an $R$-module, then $N/xN \cong L$ so that $\mu_R(N)=\mu_R(N/xN)=\mu_R(L)=1$, that is, $N$ is cyclic, as claimed.
\end{proof}

\begin{rmk} Let us note that the rigidity hypothesis is needed in \ref{s2f1}: Let $R$ be a non-regular domain. Then each nonzero proper ideal of $R$ is indecomposable. However, if $x\in \fm-\fm^2$, then the $R$-module $\fm/x\fm$ is decomposable; see, for example, \cite[5.3]{TakSyz} or \cite[7.11]{HD2}. Note that $\fm$ is not a rigid $R$-module since $R$ is not regular. \qed
\end{rmk}

Next we prepare a proposition to facilitate the proof of Theorem \ref{mainthm}. Note that $\grade_R(N)$ denotes the common length of a maximal $R$-regular sequence in $\Ann_R(N)$ of the $R$-module $N$; see \cite[1.2.6]{BH}.

\begin{prop} \label{mainprop} Let $R$ be a Henselian ring and let $M$ be an $R$-module. Assume $M$ is Burch, rigid, and $\depth_R(M)\geq 1$. Then the following hold:
\begin{enumerate}[\rm(i)]
\item $M$ has a direct summand $N$, where $N$ is Cohen-Macaulay, cyclic, rigid, $\dim_R(N)=1$, $\fm N \cong N$, and $\grade(N)=0$.
\item If $M$ is torsion-free, then $\depth(R)\leq 1$.
\end{enumerate}
\end{prop}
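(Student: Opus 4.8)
The plan is to prove (i) by using the Burch hypothesis to extract an element $x\in\fm$ that is a non zero-divisor on $M$ with $k$ a direct summand of $M/xM$ (this is \ref{s2f33}(i), applicable since $\depth_R(M)\geq 1$), and then peeling off the indecomposable summand of $M$ that ``accounts for'' $k$. Since $R$ is Henselian, write $M=\bigoplus_{i=1}^{n}M_i$ with each $M_i$ indecomposable. Each $M_i$ is rigid (being a direct summand of the rigid module $M$) and $x$ is a non zero-divisor on each $M_i$, so \ref{s2f1} shows that each $M_i/xM_i$ is indecomposable. As $R/xR$ is again Henselian and $M/xM=\bigoplus_i M_i/xM_i$, the Krull--Schmidt theorem over $R/xR$ together with the fact that the indecomposable module $k$ is a direct summand of $M/xM$ forces $k\cong M_j/xM_j$ for some $j$; then the last assertion of \ref{s2f1} gives that $N:=M_j$ is cyclic, say $N=R/I$.

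The next step is to identify $R/I$. Since $x$ is a non zero-divisor on $N=R/I$, its image $\bar x$ in $R/I$ is a non zero-divisor, so in particular $x\notin I$ and $\fm/I\neq 0$; and from $k\cong N/xN=R/(I+xR)$ we get $\fm=I+xR$, hence $\fm/I=\bar x\,(R/I)$ is principal, nonzero, and generated by a non zero-divisor. Therefore $R/I$ is a discrete valuation ring, and every remaining property in (i) can be read off: $N$ is cyclic and rigid as already observed, $N$ is Cohen--Macaulay with $\dim_R N=\dim(R/I)=1$, and multiplication by $\bar x$ gives an isomorphism $N=R/I\cong \bar x\,(R/I)=\fm N$.

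It remains to prove $\grade_R(N)=0$, which I regard as the crux. Since $R/I$ is a domain, $I$ is a prime ideal, and localizing the vanishing $\Ext^1_R(N,N)=0$ at $I$ yields $\Ext^1_{R_I}(\kappa(I),\kappa(I))=0$, where $\kappa(I)=(R/I)_I$ is the residue field of the local ring $R_I$ (here $IR_I$ is the maximal ideal of $R_I$). This forces $\fm_{R_I}/\fm_{R_I}^{2}=0$, so $\fm_{R_I}=0$ by Nakayama and $R_I$ is a field; equivalently, $I\in\Ass_R(R)$. In particular every element of $I$ is a zero-divisor on $R$, so $\grade_R(N)=\grade_R(I,R)=0$; concretely, writing $I=(a_1,\dots,a_m)$ and using $IR_I=0$ to choose $s_\ell\notin I$ with $s_\ell a_\ell=0$, the product $s_1\cdots s_m$ is a nonzero element of $(0:_R I)$, so $\Hom_R(N,R)\neq 0$.

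For (ii), assume $M$ is torsion-free. Then $\Ass_R(M)\subseteq\Ass_R(R)$, and since $N$ is a direct summand, hence a submodule, of $M$, we get $\Ass_R(N)\subseteq\Ass_R(R)$; as $R/I$ is a domain, $\Ass_R(N)=\{I\}$, so $I\in\Ass_R(R)$ (as was also seen in the proof of (i)). Recalling that $\depth R\le\dim(R/\fp)$ for every $\fp\in\Ass_R(R)$, see \cite{BH}, and that $R/I$ is one-dimensional, we conclude $\depth R\le\dim(R/I)=1$. The main obstacle throughout is the identification $\grade_R(N)=0$; once one localizes at $I=\Ann_R(N)$ and invokes the computation of $\Ext^1$ of a residue field with itself, the remainder is bookkeeping with Krull--Schmidt, \ref{s2f1}, and the elementary structure of discrete valuation rings. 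One should also verify the routine points that $R/xR$ is Henselian (being a quotient of the Henselian ring $R$) and that $k$, being an indecomposable $R/xR$-module and a direct summand of $M/xM$, indeed matches one of the indecomposables $M_j/xM_j$.
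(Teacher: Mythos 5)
Your proof is correct, and its first half is essentially the paper's: extract $x$ via \ref{s2f33}(i), decompose $M$ into indecomposables, and combine \ref{s2f1} with the Krull--Schmidt property over the Henselian ring to produce a cyclic summand $N$ with $N/xN\cong k$, whence $N$ is Cohen--Macaulay of dimension one and $\fm N\cong N$. Where you genuinely diverge is in the remaining claims. The paper handles $\grade_R(N)=0$ and part (ii) through torsion-freeness: if $\grade_R(N)>0$ then $N^{\ast}=0$, forcing $N=0$ by \cite[1.2.3(b)]{BH}, and for (ii) it takes $\fp\in\Ass_R(N)$, notes $\fp\subseteq\fq$ for some $\fq\in\Ass(R)$ by torsion-freeness, and concludes $\depth(R)\leq\dim(R/\fq)\leq\dim(R/\fp)=1$. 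You instead identify $N\cong R/I$ with $R/I$ a discrete valuation ring (so $I$ is prime), localize the rigidity $\Ext^1_R(N,N)=0$ at $I$, and deduce from $\Ext^1_{R_I}(\kappa(I),\kappa(I))=0$ that $R_I$ is a field; hence $I\in\Min(R)\subseteq\Ass(R)$, which gives both $\grade_R(N)=0$ and $\depth(R)\leq\dim(R/I)=1$. This buys something real: your argument makes the grade assertion of (i) (which the paper only verifies after assuming $M$ torsion-free) and the conclusion of (ii) independent of the torsion-free hypothesis, at the cost of the extra (but standard and correctly used) facts that a Noetherian local ring whose maximal ideal is principal and generated by a non zero-divisor is a DVR, that Ext localizes, and that $\Ext^1$ of the residue field with itself detects the embedding dimension. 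One small caveat: torsion-freeness is \emph{not} equivalent to $\Ass_R(M)\subseteq\Ass(R)$ (it only guarantees that each associated prime of $M$ is contained in some associated prime of $R$, which is what the paper uses); this slip is harmless in your write-up because the fact you actually need, $I\in\Ass(R)$, is already supplied by your localization argument.
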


\begin{proof} Note that, since $M$ is Noetherian, we can write $M \cong \bigoplus_{i=1}^{n} M_i$ for some nonzero indecomposable $R$-modules $M_1, \ldots, M_n$. As $M$ is Burch and $\depth_R(M)\geq 1$, there is an $x \in \fm$ such that $x$ is a non zero-divisor on $M$ and $k$ is a direct summand of $M/xM$; see \ref{s2f33}(i). Therefore we have $M/xM \cong \bigoplus_{i=1}^{n} M_i/xM_i$. As $k$ is a direct summand of $M/xM$ and $k$ is indecomposable, we see by the Krull–Schmidt theorem \cite[1.8]{TheBook} that $k$ is a direct summand of $M_j / xM_j$ for some $j$, where $1\leq j \leq n$. Set $N=M_j$. 

As $M$ is rigid and $N$ is a direct summand of $M$, we see that $N$ is rigid too. Moreover $x$ is a non zero-divisor on $N$. So $N$ is cyclic by \ref{s2f1}. This implies that $N/xN$ is a cyclic $R$-module and therefore it is an indecomposable $R$-module. As $k$ is a direct summand of $N/xN$, we conclude that $k\cong N/xN$. This shows that $N$ is a Cohen-Macaulay $R$-module with $\dim_R(N)=1$. Moreover $\fm (N/xN) =0$ so that $\fm N = xN \cong N$. This proves part (i).

Next assume $M$ is torsion-free. Then $N$ is also torsion-free. Therefore, if $\grade_R(N)>0$, then $N^{\ast}=0$ so that $N=0$; see \cite[1.2.3(b)]{BH}. This proves $\grade_R(N)=0$.

Let $\fp \in \Ass_R(N)$. Then, since $N$ is torsion-free, there is an $\fq \in \Ass(R)$ such that $\fp \subseteq \fq$. Therefore we have $\depth(R)\leq \dim(R/\fq) \leq \dim(R/\fp)=\dim_R(N)=1$; see \cite[1.2.13 and 2.1.2]{BH}.
\end{proof}

Proposition \ref{mainprop}(i), in view of \ref{Ul}, yields the following relation between Burch and Ulrich modules:

\begin{cor} \label{U} Let $R$ be a Henselian ring and let $M$ be an $R$-module such that $M$ is Burch, rigid, and $\depth_R(M)\geq 1$. Then $M$ has a direct summand $N$, where $N$ is a cyclic Ulrich $R$-module with $\dim_R(N)=1$. \qed
\end{cor}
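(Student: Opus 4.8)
The plan is to obtain this as a direct consequence of Proposition \ref{mainprop}(i) combined with the characterization of one-dimensional Ulrich modules recorded in \ref{Ul}.

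First I would apply Proposition \ref{mainprop}(i): since $R$ is Henselian and $M$ is Burch, rigid, with $\depth_R(M)\geq 1$, that proposition produces a direct summand $N$ of $M$ which is Cohen-Macaulay, cyclic, rigid, satisfies $\dim_R(N)=1$, and has $\fm N\cong N$. So the module $N$ with all the desired formal properties is already handed to us; the only thing left to verify is that $N$ is Ulrich.

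For this I would invoke \ref{Ul}, where it is recalled (from \cite[4.8 and 5.2]{SDTK}) that a one-dimensional Cohen-Macaulay $R$-module is Ulrich if and only if it is isomorphic to $\fm N$, i.e.\ $N\cong\fm N$. Since Proposition \ref{mainprop}(i) guarantees precisely $\fm N\cong N$, we conclude that $N$ is Ulrich. Together with the facts that $N$ is cyclic and $\dim_R(N)=1$, this is exactly the assertion of the corollary.

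There is essentially no obstacle here: the one genuinely substantive input is the equivalence ``$N$ Ulrich $\iff N\cong\fm N$'' for one-dimensional Cohen-Macaulay modules, and that is already available via \ref{Ul}. If one preferred to argue it directly, the point would be that for such an $N$ a general element $x\in\fm$ gives a minimal reduction $xN$ of $\fm N$ with $\e_R(N)=\len_R(N/xN)$, so that $\mu_R(N)=\e_R(N)$ holds exactly when $\fm N=xN\cong N$; but invoking \ref{Ul} makes even this unnecessary, and the corollary follows immediately.
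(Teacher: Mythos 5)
Your argument is exactly the paper's: the corollary is stated as an immediate consequence of Proposition \ref{mainprop}(i) together with the characterization in \ref{Ul} that a one-dimensional Cohen–Macaulay module $N$ is Ulrich iff $N\cong\fm N$. Nothing further is needed.
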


We need a few more observations prior to giving a proof of the main theorem of this section. 

\begin{chunk} \label{preserve} Let $M$ be an $R$-module. Let  $\widehat{(-)}$ denote the $\fm$-adic completion functor.
\begin{enumerate}[\rm(a)]
\item Assume $M$ is Burch, that is, $M$ is an $R$-submodule of an $R$-module $T$ such that $\fm (M:_T \fm) \nsubseteq \fm M$. Set $X=\fm (M:_T \fm)$ and $Y= \fm M$. If $\widehat{X} \subseteq \widehat{Y}$, then $\widehat{X} \cap \widehat{Y}=\widehat{X \cap Y}=\widehat{X}$ so that $X\cap Y=X$, that is, $X\ \subseteq Y$. Hence $\widehat{X} \nsubseteq \widehat{Y}$ so that $\widehat{M}$ is Burch over $\widehat{R}$.

\item If $M$ is torsion-free, or equivalently, if $\bigcup \Ass_R(M) \subseteq \bigcup \Ass(R)$, then we do not know whether or not $\widehat{M}$ must be torsion-free over $\widehat{R}$. On the other hand we discuss two affirmative cases:
\begin{enumerate}[\rm(i)]
\item Assume $M$ is torsion-free and generically free. Then $M$ is an $R$-submodule of a free $R$-module; see \cite[3.5(c)]{EG}. As the same holds for $\widehat{M}$ over $\widehat{R}$, it follows that $\widehat{M}$ is torsion-free over $\widehat{R}$.
\item In general, if $\Ass_R(M)\subseteq \Ass(R)$, then $\Ass_{\widehat{R}}(\widehat{M})\subseteq \Ass(\widehat{R})$; see \cite[23.2]{Mat}. Hence, if $M$ is torsion-free and $\Ass(R)=\Min(R)$, then $\Ass_R(M) \subseteq \Min(R)$ so that $\widehat{M}$ is torsion-free over $\widehat{R}$; see, for example, \cite[3.8]{YYNE}. 
\end{enumerate}
%

\item Assume $M$ has rank, say $r$. Let $\fq\in \Ass(\widehat{R})$ and set $\fp=\fq \cap R$. Then $\depth(R_{\fp})\leq \depth(\widehat{R}_{\fq})=0$; see \cite[1.2.16]{BH}. Hence $\widehat{M}_{\fq} \cong M_{\fp} \otimes_{R_{\fp}} \widehat{R}_{\fq} \cong \widehat{R}^{\oplus r}_{\fq}$. This shows that $\widehat{M}$ has rank $r$ over $\widehat{R}$. \qed
\end{enumerate}
\end{chunk}

The next result stems from the arguments of Levin-Vascencelos \cite{LV}.

\begin{chunk} \label{sonekle} Let $M$ and $N$ be $R$-modules such that $M$ has rank and $\grade_R(N)=0$. If $\Ext^t_R(M, \fm N)=0$, then $\pd_R(M)<t$; see \cite[2.6(6)]{SDTK}. \qed
\end{chunk}

Next is our main result in this section:

\begin{thm} \label{mainthm} Let $M$ be an $R$-module which is Burch, rigid, and torsion-free. 
\begin{enumerate}[\rm(i)]
\item If $\Ass(R)=\Min(R)$, then $\depth(R)\leq 1$, that is, $R$ is either Artinian or $\depth(R)=1$.
\item If $M$ has rank, then $M$ is free, and $R$ is either a field or a discrete valuation ring.
\end{enumerate}
\end{thm}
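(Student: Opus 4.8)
The strategy is to reduce to the Henselian (indeed complete) case and then invoke Proposition \ref{mainprop} together with the Levin--Vasconcelos-type criterion in \ref{sonekle}. First I would pass to the completion $\widehat R$. By \ref{preserve}(a) the module $\widehat M$ is Burch over $\widehat R$; rigidity is preserved since $\Ext^1_{\widehat R}(\widehat M,\widehat M)\cong \Ext^1_R(M,M)\otimes_R\widehat R=0$ by flatness. Under the hypothesis $\Ass(R)=\Min(R)$ of part (i), \ref{preserve}(b)(ii) gives that $\widehat M$ is torsion-free over $\widehat R$; and under the rank hypothesis of part (ii), \ref{preserve}(c) gives that $\widehat M$ has rank over $\widehat R$ (and a module with rank satisfies $\Ass(R)\subseteq\Ass(\text{free})\subseteq\Min(R)$ after one checks the torsion-free condition, so the completion is again torsion-free). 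Since $\depth\widehat R=\depth R$ and freeness descends faithfully flatly, it suffices to prove both statements when $R$ is complete local, hence Henselian. Note also $\depth_R(M)\geq 1$: $M$ is torsion-free and nonzero (being Burch), so an associated prime of $M$ lies in $\Ass(R)$, and since $M\neq 0$ while $\fm\notin\Ass(R)$ would need to be argued — actually one uses that a torsion-free module over a ring with $\depth\geq 1$ has positive depth, and if $\depth R=0$ the ring is Artinian and the statements are handled directly; I will organize the case split accordingly.

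For part (i): assuming $R$ is complete and not Artinian, so $\depth R\geq 1$, a torsion-free module $M$ has $\depth_R(M)\geq 1$, so Proposition \ref{mainprop}(ii) applies directly and yields $\depth R\leq 1$, hence $\depth R=1$. (If $R$ is Artinian there is nothing to prove.) This is the easy half.

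For part (ii): by part (i) (whose hypothesis $\Ass(R)=\Min(R)$ is implied by $M$ having rank once $M$ is torsion-free), $R$ is Artinian or $\depth R=1$. In the Artinian case, a torsion-free module with rank over an Artinian ring forces $M$ to be free and $R$ to be a field — here one uses that $\Ass(R)=\{\fm\}$, so the rank condition says $M_{\fm}\cong R^{\oplus r}$, i.e. $M\cong R^{\oplus r}$, and then \ref{s2f33}(ii) (Burch $+$ finite projective dimension $\Rightarrow$ $R$ regular) forces $R$ to be a field. So assume $\depth R=1$. Now apply Proposition \ref{mainprop}(i): $M$ has a direct summand $N$ that is cyclic, rigid, Cohen--Macaulay of dimension $1$ with $\fm N\cong N$ and $\grade_R(N)=0$. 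The plan is to feed $N$ (or rather $M$) into \ref{sonekle}: since $M$ has rank and $\grade_R(N)=0$, and $N$ is a summand of $M$, I want to exploit $\fm N\cong N$. Concretely, $\Ext^1_R(M,N)$ is a summand of... — better: use that $N\cong\fm N$ and $N$ is a summand of $M$, so $\Ext^t_R(M,\fm N)\cong\Ext^t_R(M,N)$, and since $N\mid M$ and $M$ is rigid, $\Ext^1_R(M,N)=0$; hence by \ref{sonekle} with $t=1$, $\pd_R(M)<1$, i.e. $M$ is free. Then $N$, being a nonzero free cyclic module, is $N\cong R$, but $N\cong\fm N=\fm$ forces $R=\fm$, contradicting $\depth R=1\geq 1$ unless — wait, $N\cong R$ and $\fm N\cong N$ gives $\fm\cong R$ as $R$-modules, impossible for a local ring that is not a field; since $\depth R=1$ it is not a field, contradiction. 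Hence the case $\depth R=1$ cannot produce a cyclic summand with $\fm N\cong N$ unless $M$ is free and then we re-examine: if $M$ is free and Burch then \ref{s2f33}(ii) gives $R$ regular, so with $\depth R=1$, $R$ is a DVR. Thus in all cases $M$ is free and $R$ is a field or a DVR.

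The main obstacle I anticipate is the completion step in part (ii): one must be sure that "$M$ torsion-free with rank over $R$" descends/ascends correctly through $\widehat{(-)}$, i.e. that $\widehat M$ is torsion-free over $\widehat R$ — this is exactly why the paper separates \ref{preserve}(b) and (c), and the clean route is to observe that having rank forces $\Ass(R)\subseteq\Min(R)$ so that \ref{preserve}(b)(ii) applies. The second delicate point is correctly running the dimension/depth bookkeeping so that the cyclic summand $N$ with $\fm N\cong N$ is genuinely incompatible with $\depth R=1$ and $M$ free, which I expect to resolve by the short argument above but will need to state carefully, falling back on \ref{s2f33}(ii) to conclude regularity once $\pd_R(M)<\infty$.
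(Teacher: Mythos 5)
Your argument is essentially the paper's: reduce to the complete (hence Henselian) case via \ref{preserve}, invoke Proposition~\ref{mainprop} to extract a cyclic direct summand $N$ with $\fm N\cong N$ and $\grade_R(N)=0$, use rigidity to see $\Ext^1_R(M,\fm N)\cong\Ext^1_R(M,N)=0$ and hence $\pd_R(M)\leq 0$ by \ref{sonekle}, and finish with \ref{s2f33}(ii). You also handle the $\depth(R)=0$ case of part (ii) explicitly, which the paper leaves implicit.

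Two slips in the exposition should be fixed, even though neither ultimately affects the conclusion. First, your claim that ``having rank forces $\Ass(R)\subseteq\Min(R)$'' is false: rank is a hypothesis on $M$, not on $R$. For instance, $R=k[\![x,y]\!]/(x^2,xy)$ has $\Ass(R)=\{(x),(x,y)\}\supsetneq\Min(R)=\{(x)\}$, yet every free $R$-module has rank. What actually makes the reduction work is that a torsion-free module with rank is in particular generically free, hence embeds in a free module by \ref{preserve}(b)(i), and the same is true for $\widehat M$ over $\widehat R$; this gives torsion-freeness of the completion directly, without any hypothesis on $\Ass(R)$. Second, your assertion that ``$\fm\cong R$ is impossible for a local ring that is not a field'' is also false: $\fm\cong R$ holds precisely when $\fm$ is principal and generated by a non-zero-divisor, i.e., for every discrete valuation ring. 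So there is no contradiction to be derived in the $\depth(R)=1$ case -- the endgame $M$ free, $N\cong R\cong\fm$, $R$ a DVR is perfectly consistent -- and the correct way to conclude (which you do reach in the end) is simply \ref{s2f33}(ii): $M$ free and Burch forces $R$ regular, hence a field or a DVR.
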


\begin{proof} If $\Ass(R)=\Min(R)$ or $M$ has rank, then $\widehat{M}$ is torsion-free over $\widehat{R}$; see \ref{preserve}(b)(iii). Therefore, by considering $\widehat{M}$ over $\widehat{R}$, we may assume $R$ is Henselian and $M$ is a Burch, rigid, and torsion-free $R$-module; see \ref{preserve}(a) and \cite[1.9 and 1.10]{TheBook}. Hence Proposition \ref{mainprop} shows that  $\depth(R)\leq 1$ and $M$ has a direct summand $N$ such that $\fm N \cong N$ and $\grade_R(N)=0$.

As $M$ is rigid, we have that $\Ext^1_R(M,N)=\Ext^1_R(M, \fm N)=0$. Hence, if $M$ has rank, then \ref{sonekle} implies that $M$ is free. Thus we conclude from \ref{s2f33}(ii) that $R$ is a field or a discrete valuation ring.
\end{proof}

The rank hypothesis in Theorem \ref{mainthm}(ii) is necessary; we can see this by the following example.

\begin{eg} Let $R=k[\![x,y]\!]/(xy)$ and $M=R/(x)$. Then $M \cong k[\![y]\!]$. Hence $\depth_R(M)=1$, $y$ is a non zero-divisor on $M$, and $k\cong M/yM$. Therefore $M$ is a Burch $R$-module; see \ref{s2f33}(ii). Moreover $M$ is rigid, but $M$ does not have rank since $M_{(x)}\neq 0=M_{(y)}$. \qed
\end{eg}

Next we prove Corollary \ref{corFrob} advertised in the introduction:

\begin{proof} [Proof of Corollary \ref{corFrob}] Note that $\up{e}{R}$ is a torsion-free $R$-module. Note also that, since $R$ is reduced, $R_{\fp}$ is a field for each $\fp \in \Ass(R)$. Hence $\up{e}{R}$ has rank for $e\gg 0$; see \cite[2.3 and 2.4]{Kunz}. Hence the claim follows from \ref{HWex}(vi) and Theorem \ref{mainthm}(ii).
\end{proof}

\begin{rmk}\label{rmkFrob} As mentioned in the introduction, if $R$ is a one-dimensional F-finite Cohen-Macaulay local ring of prime characteristic $p$, and if $\up{e}{R}$ is rigid for some $e\gg 0$, then it follows from \cite[6.10]{HD2} that $R$ is regular. The proof of \cite[6.10]{HD2} relies upon a result of Koh-Lee \cite[2.6]{KahMah}: If $R$ is as before and if $\Tor_n^R(M,\up{e}{R})=0$ for some $R$-module $M$ and $n\geq 1$, then $\pd_R(M)<\infty$; see also \cite[A.3]{HD2}. Let us note that, under the setup of Corollary \ref{corFrob} we do not know whether or not $\up{e}{R}$ enjoys a similar property. For example, if $R=\F_p[\![x^4, x^3y, xy^3, y^4]\!]$, then $R$ is an $F$-finite local domain of depth one and dimension two;  in that case \cite[6.10]{HD2} does not apply, but Corollary \ref{corFrob} shows that $\up{e}{R}$ is not rigid for all $e\gg 0$.\qed
\end{rmk}

Recall that, over a one-dimensional Gorenstein domain $R$, Conjecture \ref{HWC} is equivalent to the following statement: if $M$ is a torsion-free $R$-module such that $M\otimes_RM^{\ast}$ is torsion-free, then $M$ is free. To address the conjecture, O.~Celikbas and Kobayashi \cite{CelKob} proved the following result; see also \cite[7.4(2)]{SDTK} for a similar result for modules.

\begin{chunk}(\cite[1.3 and 3.8]{CelKob}) \label{CeTos} Let $R$ be a one-dimensional local domain which is not regular, and let $I$ and $J$ be ideals of $R$. If $0\neq I \subseteq \fm J$ and $(I:_RJ)=(\fm I:_R \fm J)$, then $I$ is Burch and $I\otimes_R I^{\ast}$ has nonzero torsion. \qed
\end{chunk}

In passing let us note that there are examples of ideals $I$ over one dimensional non-regular local rings $R$ such that $0\neq I \subseteq \fm J$ and $(I:_RJ)=(\fm I:_R \fm J)$ for some proper ideal $J$ of $R$; see, for example, \cite[4.3]{CelKob}. On the other hand, there can be Burch ideals $I$ for which this equality does not hold for any $J$ with $0\neq I \subseteq \fm J$. Next we give such an example. This example indicates that Theorem \ref{thm1} is not subsumed by the result stated in \ref{CeTos}.

\begin{eg} \label{excompare} Let $R=k[\![t^4, t^5, t^6 ]\!]$ and $I=(t^5, t^8)$. Then $\fm^2=(t^8, t^9, t^{10}, t^{11})$ and $\fm I=(t^9, t^{10}, t^{11}, t^{12})$. 

Claim 1: $I$ is Burch, that is, $\fm (I:_R \fm) \nsubseteq \fm I$.\\
Proof of Claim 1: As $\fm^2 \subseteq I$, it follows that $\fm \subseteq (I:_R \fm)$ and so $\fm=(I:_R \fm)$. Thus $\fm (I:_R \fm)=\fm^2  \nsubseteq \fm I$ as $t^8 \in \fm^2$ but $t^8 \notin \fm I$.
 
Claim 2: For each ideal $J$ of $R$ such that $0\neq I \subseteq \fm J$, then $(I:_RJ)\neq (\fm I:_R \fm J)$.\\
Proof of Claim 2: Let $J$ be an ideal of $R$ such that $I \subseteq \fm J$. If $J \neq R$, then $I \subseteq \fm^2$ since $J\subseteq \fm$, but that is not true because $t^5 \notin \fm^2$. So $J=R$, and we need to justify $I=(I:_RR)\neq (\fm I:_R \fm)$, or equivalently, $(\fm I:_R \fm) \nsubseteq I$. We can see this as follows: $t^6 \notin I$, but on the other hand $t^6 \in (\fm I:_R \fm)$ since $t^6 \fm =(t^{10}, t^{11}, t^{12}) \subseteq \fm I$.  \qed
\end{eg}

It is known that Conjecture \ref{HWC} holds for ideals over rings of the form $k[\![t^a, t^{a+1}, \ldots, t^{2a-2} ]\!]$ if $k$ is a field and $a\geq 3$; see \cite[1.6]{HWendo}. Hence the fact that $I$ in Example \ref{excompare} is not rigid can be deduced by \cite[1.6]{HWendo}, or by Theorem \ref{thm1} since it is a Burch ideal.


\section{Proof of Theorem \ref{thm2}}
 
The main purpose of this section is to prove Theorem \ref{thm2} and formulate Conjecture \ref{HWC} in terms of the vanishing of Ext over Gorenstein local domains. Our argument shows that Conjecture \ref{HWC} is indeed equivalent to a stronger version of a celebrated conjecture of Auslander and Reiten \cite{AuRe}; see the paragraph preceding Theorem \ref{thm2}.  

The proof of Theorem \ref{thm2} requires some preparation. For that we prove two propositions, each of which deals with modules that have finite Gorenstein dimension.

\begin{chunk} (\cite{AuBr}) \label{G} An $R$-module $M$ is said to be \emph{totally reflexive} if the natural map $M \to M^{\ast\ast}$ is bijective and $\Ext^i_R(M,R) = 0 = \Ext^i_R(M^{\ast},R)$ for all $i\geq 1$. If $M\neq 0$, then the infimum of $n$ for which there exists an exact sequence $0\to X_n \to \cdots \to X_0 \to M \to 0$, where each $X_i$ is totally reflexive, is called the \emph{Gorenstein dimension} of $M$. If $M$ has Gorenstein dimension $n$, we write $\Gdim_R(M) = n$. Therefore, $M$ is totally reflexive if and only if $\Gdim_R(M)\leq 0$, where it follows by convention that $\Gdim_R(0)=-\infty$. 

In the rest of the paper we use the following properties freely.
\begin{enumerate}[\rm(i)]
\item $R$ is Gorenstein if and only if $\Gdim_R(M)<\infty$ for each $R$-module $M$. 
\item $\Gdim_R(M) \leq \pd_R(M)$, and $\Gdim_R(M)=\pd_R(M)$ if $\pd_R(M)<\infty$.
\item $\Gdim_R(M)+\depth_R(M)=\depth(R)$ if $M\neq 0$ and $\Gdim_R(M)<\infty$. 
\item $\Gdim_{R_{\fp}}(M_{\fp})\leq \Gdim_R(M)$ for all $\fp \in \Spec(R)$. \qed
\end{enumerate}
\end{chunk}

The next result is key for our argument.

\begin{prop} \label{AR1} Let $M$ be an $R$-module. Assume $d=\depth(R)\geq 2$ and the following:
\begin{enumerate}[\rm(a)]
\item $\Gdim_R(M)\leq d-1$.
\item $\pd_{R_{\fp}}(M _{\fp})<\infty$ for each $\fp \in \Spec(R)-\{\fm\}$. 
\end{enumerate}
Then there exists an exact sequence of $R$-modules $0 \to Y \to X \to M \to 0$ such that the following hold:
\begin{enumerate}[\rm(i)]
\item $X$ is totally reflexive and locally free on $\Spec(R)-\{\fm\}$.
\item $\pd_R(Y)\leq d-2$.
\item $\Ext^d_R(M,M) \cong \Ext^d_R(X,X)$.
\item If $\Ext^{d-1}_R(M,M)=0$, then $\Ext^{d-1}_R(X,X)=0$.
\end{enumerate} 
\end{prop}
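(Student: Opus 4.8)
The idea is to produce the module $X$ as a ``totally reflexive approximation'' of $M$ that only differs from $M$ by something of finite projective dimension, and then to check that this difference is invisible to $\Ext^{d-1}_R(-,-)$ and $\Ext^d_R(-,-)$ by a local cohomology / depth count. First I would invoke the standard approximation theory for modules of finite Gorenstein dimension (Auslander--Bridger, as recalled in \ref{G}): since $\Gdim_R(M)\le d-1<\infty$, there is an exact sequence
\[
0 \to Y \to X \to M \to 0
\]
with $X$ totally reflexive and $\pd_R(Y)\le \Gdim_R(M)-1\le d-2$. (If $M$ already has finite projective dimension one instead takes $X=0$-type degenerate approximation, but generically one uses the approximation triangle.) This immediately gives (ii). For (i), the point is that $X$ is locally free on the punctured spectrum: for $\fp\ne\fm$, $\pd_{R_\fp}(M_\fp)<\infty$ by hypothesis (b), and $\pd_{R_\fp}(Y_\fp)<\infty$ since $Y$ has finite projective dimension globally, so from the localized sequence $\pd_{R_\fp}(X_\fp)<\infty$; but $X_\fp$ is also totally reflexive over $R_\fp$, hence free (a totally reflexive module of finite projective dimension is free). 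So $X$ is locally free on $\Spec(R)-\{\fm\}$, proving (i).

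Next, for (iii) and (iv), I would apply $\Hom_R(M,-)$ and $\Hom_R(-,M)$ and also $\Hom_R(X,-)$ to the short exact sequence, and compare the resulting long exact sequences of Ext. The key mechanism is that $Y$ has finite projective dimension at most $d-2$, so $\Ext^i_R(Y,-)=0$ and $\Ext^i_R(-,Y)$-type terms vanish for $i\ge d-1$ — more precisely one wants $\Ext^i_R(Y,M)=0$ for $i\ge d-1$ and $\Ext^i_R(M,Y)=0$ (or $\Ext^i_R(X,Y)=0$) for $i\ge d-1$. The first is immediate from $\pd_R(Y)\le d-2$. The second is the delicate one: $\Ext^i_R(M,Y)$ for a module $Y$ of finite projective dimension need not vanish just because $i>\pd_R(Y)$; one needs a depth/Gorenstein-dimension estimate. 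Here I would use that $\Gdim_R(M)\le d-1$ together with $\depth(R)=d$: by \ref{G}(iii), $\depth_R(M)\ge 1$, and more to the point $\Ext^i_R(M,R)=0$ for $i>\Gdim_R(M)$, which (since $Y$ has a finite free resolution of length $\le d-2$) forces $\Ext^i_R(M,Y)=0$ for $i\ge d-1$; similarly $\Ext^i_R(X,Y)=0$ for $i\ge d-1$ because $X$ is totally reflexive, so $\Ext^i_R(X,R)=0$ for all $i\ge1$, and again resolving $Y$ by frees gives the vanishing. With these vanishings in hand, the long exact sequence in $\Ext_R(M,-)$ applied to $0\to Y\to X\to M\to 0$ gives isomorphisms $\Ext^i_R(M,Y)=0 \Rightarrow \Ext^i_R(M,X)\xrightarrow{\ \sim\ }\Ext^i_R(M,M)$ for $i\ge d-1$ (using also $\Ext^{i-1}_R(M,M)$ connecting, carefully at $i=d-1$), and then the long exact sequence in $\Ext_R(-,M)$ applied to the same sequence, together with $\Ext^i_R(Y,M)=0$ for $i\ge d-1$, gives $\Ext^i_R(X,M)\xrightarrow{\ \sim\ }\Ext^i_R(X,X)$ via $\Ext^i_R(X,Y)=0$. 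Chaining these yields $\Ext^d_R(M,M)\cong\Ext^d_R(X,X)$, which is (iii), and the same chain at degree $d-1$ shows $\Ext^{d-1}_R(M,M)=0 \Rightarrow \Ext^{d-1}_R(X,X)=0$, which is (iv).

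\textbf{Main obstacle.} The routine part is the approximation sequence and the localization argument for (i)--(ii); the real work is bookkeeping the two long exact sequences of $\Ext$ so that the connecting maps at degree exactly $d-1$ behave, and — more seriously — establishing the vanishing $\Ext^i_R(M,Y)=0$ and $\Ext^i_R(X,Y)=0$ for $i\ge d-1$. The latter is where one genuinely uses that $Y$ has \emph{finite projective dimension $\le d-2$} (not just finite Gdim) and that $M,X$ have finite Gorenstein dimension with the sharp bounds coming from $\depth(R)=d$: one should resolve $Y$ by a free complex of length $\le d-2$ and run a dimension-shifting / hyper-Ext spectral sequence (or just splice), reducing to $\Ext^{\ge1}_R(X,R)=0$ (true since $X$ is totally reflexive) and $\Ext^{>\Gdim}_R(M,R)=0$ (true by definition of Gorenstein dimension). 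I would also need to be careful that the approximation $X$ can be chosen so that $Y$ genuinely has $\pd\le d-2$ rather than merely $\Gdim\le d-2$ — this is exactly the content of the Auslander--Bridger approximation theorem when $\Gdim_R(M)<\infty$, so I would cite that precisely rather than reprove it.
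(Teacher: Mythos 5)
Your set-up of the Auslander--Buchweitz approximation $0 \to Y \to X \to M \to 0$ and your treatment of (i) and (ii) are fine. (The paper reaches $\pd_R(Y)\le d-2$ by a depth-lemma argument from $\pd_R(Y)<\infty$, whereas you quote that the minimal approximation satisfies $\pd_R(Y)=\Gdim_R(M)-1$; both routes work.)

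The problem is in (iii) and (iv). Your chain of isomorphisms does not actually link up. Applying $\Hom_R(M,-)$ to the sequence produces a comparison between $\Ext^i_R(M,X)$ and $\Ext^i_R(M,M)$, while the step you describe as ``the long exact sequence in $\Ext_R(-,M)$'' produces a statement about $\Ext^i_R(X,M)$ and $\Ext^i_R(X,X)$ (and in fact that statement really comes from $\Hom_R(X,-)$ together with $\Ext^{\ge 1}_R(X,Y)=0$, not from $\Hom_R(-,M)$ at all). The two intermediate terms $\Ext^i_R(M,X)$ and $\Ext^i_R(X,M)$ are different groups, so there is nothing to chain: you cannot conclude $\Ext^d_R(M,M)\cong\Ext^d_R(X,X)$ this way. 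Moreover, even if you repair the second step to use $\Hom_R(-,X)$ so that the common term is $\Ext^i_R(M,X)$, part (iv) breaks: the surjection $\Ext^{d-1}_R(M,Y)\twoheadrightarrow\Ext^{d-1}_R(M,X)$ (given $\Ext^{d-1}_R(M,M)=0$) only yields $\Ext^{d-1}_R(M,X)=0$ if $\Ext^{d-1}_R(M,Y)=0$, and that vanishing is \emph{not} available. Dimension shifting along a length-$(d-2)$ free resolution of $Y$ only uses $\Ext^{i}_R(M,R)=0$ for $i>\Gdim_R(M)$, i.e.\ for $i\ge d$; it gives $\Ext^{i}_R(M,Y)=0$ for $i\ge d$ but says nothing at $i=d-1$. (Already when $d=2$, $Y$ is free and $\Ext^1_R(M,Y)$ is a sum of copies of $\Ext^1_R(M,R)$, which need not vanish when $\Gdim_R(M)=1$.)

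The paper's proof avoids $\Hom_R(M,-)$ entirely and uses the compatible pair $\Hom_R(X,-)$ and $\Hom_R(-,M)$: from $\Hom_R(X,-)$ and the trivial vanishing $\Ext^{\ge1}_R(X,Y)=0$ (since $X$ is totally reflexive and $\pd_R(Y)<\infty$) one gets $\Ext^i_R(X,X)\cong\Ext^i_R(X,M)$ for all $i\ge 1$; from $\Hom_R(-,M)$ and the equally trivial $\Ext^{d-1}_R(Y,M)=\Ext^{d}_R(Y,M)=0$ (since $\pd_R(Y)\le d-2$) one gets $\Ext^{d}_R(M,M)\cong\Ext^{d}_R(X,M)$ and, if $\Ext^{d-1}_R(M,M)=0$, that $\Ext^{d-1}_R(X,M)=0$. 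These chain cleanly through the common term $\Ext^i_R(X,M)$, and no control of $\Ext_R(M,Y)$ is ever needed.
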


\begin{proof} It follows, since $\Gdim_R(M)<\infty$, that there is a short exact sequence of $R$-modules
\begin{equation}\tag{\ref{AR1}.1}
0 \to Y \to X \to M \to 0,
\end{equation}
where $\pd_R(Y)<\infty$ and $X$ is totally reflexive; see \cite[1.1]{AuBu}. As $\pd_{R_{\fp}}(M _{\fp})<\infty$ for each $\fp \in \Spec(R)-\{\fm\}$, it follows that $X$ is locally free on $\Spec(R)-\{\fm\}$. Note also $\depth_R(M)\geq 1$ since $\Gdim_R(M)\leq d-1$.

If $\depth_R(M)\geq d$, then the depth lemma applied to the exact sequence $0 \to Y \to X \to M \to 0$ shows that $\depth_R(Y)\geq d$; so, in this case, $Y$ is free since $\pd_R(Y)<\infty$. On the other hand, if $\depth_R(M)\leq d-1$, then, since $\depth_R(M)\geq 1$, we can use the depth lemma once more along with the exact sequence (\ref{AR1}.1) and see that $\depth_R(Y)=\depth_R(M)+1\geq 2$. Hence, regardless of what the depth of $M$ is, we have that $\pd_R(Y)\leq d-2$.

The exact sequence (\ref{AR1}.1) yields the following exact sequences for each $i\geq 1$:
\begin{equation}\tag{\ref{AR1}.1}
\Ext^i_R(X,Y) \to \Ext^i_R(X,X) \to \Ext^i_R(X,M)  \to \Ext^{i+1}_R(X,Y),
\end{equation}
and
\begin{equation}\tag{\ref{AR1}.2}
 \Ext^i_R(M,M) \to  \Ext^i_R(X,M) \to \Ext^{i}_R(Y,M) \to  \Ext^{i+1}_R(M,M) \to  \Ext^{i+1}_R(X,M) \to \Ext^{i+1}_R(Y,M).
\end{equation}

Note that $\Ext^j_R(X,Y)=0$ for all $j\geq 1$ since $\pd_R(Y)<\infty$ and $X$ is totally reflexive. So (\ref{AR1}.1) gives:
\begin{equation}\tag{\ref{AR1}.3}
\Ext^i_R(X,X) \cong \Ext^i_R(X,M) \text{ for each } i\geq 1.
\end{equation}

Note that, since $\pd_R(Y)\leq d-2$, we have $\Ext^{d-1}_R(Y,M)=\Ext^{d}_R(Y,M)=0$. Hence, setting $i=d-1$ in (\ref{AR1}.2), we obtain from 
(\ref{AR1}.2) and (\ref{AR1}.3) that:
\begin{equation}\tag{\ref{AR1}.4}
\Ext^d_R(M,M) \cong \Ext^d_R(X,M) \cong \Ext^d_R(X,X). 
\end{equation}

Now assume  $\Ext^{d-1}_R(M,M)=0$. Then, as $\Ext^{d-1}_R(Y,M)=0$, we see from (\ref{AR1}.2) that $\Ext^{d-1}_R(X,M)$ vanishes. So (\ref{AR1}.3) shows that 
$\Ext^{d-1}_R(X,X)=0$, as required. 
\end{proof}

We recall a few facts for the proof of Proposition \ref{mainthmsec3}.

\begin{chunk}
Recall that an $R$-module $M$ is said to satisfy Serre's condition $(S_n)$ for some integer $n\geq 0$ provided that $\depth_{R_{\fp}}(M_{\fp}) \geq \min\{n, \dim(R_{\fp}) \}$ for all $\fp \in \Spec(R)$; see \cite[page 3]{EG} (note that there are different versions of Serre's condition; cf. \cite[page 63]{BH}).

If $R$ is Gorenstein, then $M$ satisfies $(S_2)$ if and only if $M$ is reflexive if and only if $M$ is a second syzygy module; see \cite[3.6]{EG}
\end{chunk}

\begin{chunk} \label{mat} If $M\neq 0$ and $\pd_R(M)=n<\infty$, then $\Ext^n_R(M,M)\neq 0$; see \cite[Lemma 1(iii), page 154]{Mat}.
\end{chunk}


\begin{chunk} \label{Mat} \label{GT} \label{HJ} Let $R$ be a $d$-dimensional Cohen-Macaulay ring with canonical module $\omega_R$, and let $M$ and $N$ be $R$-modules. If $N$ is a maximal Cohen-Macaulay $R$-module such that $M$ or $N^\dagger$ is locally free on $\Spec(R)-\{\fm\}$, then $\Ext_{R}^i(M,N)\cong \Ext_{R}^i(N^\dagger\otimes_R M,\omega_R)$ for all $i=0, \ldots, d$, where $N^\dagger=\Hom_R(N,\omega_R)$; see \cite[2.3]{GoTak}.
\end{chunk}


\begin{prop} \label{mainthmsec3} Let $R$ be a $d$-dimensional Cohen-Macaulay local ring with a canonical module $\omega_R$. Consider the following two conditions on $R$:
\begin{enumerate}[\rm(i)]
\item Given a totally reflexive $R$-module $M$, if $M$ has rank and $M \otimes_R M^\dagger$ satisfies $(S_2)$, then $M$ is free.
\item Given a torsion-free $R$-module $N$, if $N$ has rank, $\Gdim_R(N)<\infty$, and $\Ext_R^i(N, N) = 0$ for all $i=1, \ldots, d$, then $N$ is free.
\end{enumerate} 
If $R_{\fp}$ satisfies the condition in part (i) for all $\fp \in \Spec(R)$, then $R$ satisfies the condition in part (ii).
\end{prop}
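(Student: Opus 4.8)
The plan is to prove the statement by induction on $d=\dim R$, the inductive assertion being that every Cohen-Macaulay local ring with a canonical module, all of whose localizations satisfy the condition in~(i), satisfies the condition in~(ii). The base case $d=0$ is immediate, because over an Artinian local ring a torsion-free module with rank $r$ is isomorphic to $R^{\oplus r}$. For the inductive step let $N$ be a torsion-free $R$-module with rank such that $\Gdim_R(N)<\infty$ and $\Ext^i_R(N,N)=0$ for $i=1,\dots,d$; I may assume $N\neq 0$, and the goal is that $N$ is free.

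First I would show $N$ is locally free on $\Spec(R)-\{\fm\}$. For a nonmaximal prime $\fp$, the ring $R_{\fp}$ is Cohen-Macaulay of dimension $<d$, has canonical module $(\omega_R)_{\fp}$, and every one of its localizations equals some $R_{\fq}$ and so satisfies the condition in~(i); hence by the inductive hypothesis $R_{\fp}$ satisfies the condition in~(ii). Over the Cohen-Macaulay ring $R$, torsion-freeness of $N$ is the condition $\Ass_R(N)\subseteq\Min(R)$, which localizes, so $N_{\fp}$ is torsion-free over $R_{\fp}$; it also has rank, has finite Gorenstein dimension by~\ref{G}, and (since $\Ext$ commutes with localization and $\dim R_{\fp}<d$) satisfies $\Ext^i_{R_{\fp}}(N_{\fp},N_{\fp})=0$ for $1\le i\le\dim R_{\fp}$. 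Therefore $N_{\fp}$ is free.

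Next I would pass to a totally reflexive module and apply~(i). If $d=1$, torsion-freeness forces $\depth_R(N)=1=\dim R$, so $N$ is maximal Cohen-Macaulay, hence totally reflexive, and I set $X=N$. If $d\ge 2$, then $\Gdim_R(N)\le d-1$ by~\ref{G}, and the first step makes Proposition~\ref{AR1} available: it yields an exact sequence $0\to Y\to X\to N\to 0$ with $X$ totally reflexive and locally free on the punctured spectrum, $\pd_R(Y)\le d-2$, $\Ext^d_R(X,X)\cong\Ext^d_R(N,N)=0$, and $\Ext^{d-1}_R(X,X)=0$. In either case $X$ is a nonzero totally reflexive module of positive rank, locally free on $\Spec(R)-\{\fm\}$, with $\Ext^i_R(X,X)=0$ for $\max\{1,d-1\}\le i\le d$. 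Since $X$ is maximal Cohen-Macaulay and $X^{\dagger}$ is locally free on the punctured spectrum, \ref{HJ} gives $\Ext^i_R(X\otimes_R X^{\dagger},\omega_R)\cong\Ext^i_R(X,X)$ for $0\le i\le d$; as $\omega_R$ has injective dimension $d$, local duality then yields $\depth_R(X\otimes_R X^{\dagger})\ge\min\{2,d\}$, and since $X$, hence $X\otimes_R X^{\dagger}$, is locally free on the punctured spectrum, $X\otimes_R X^{\dagger}$ satisfies $(S_2)$. Because $R=R_{\fm}$ satisfies the condition in~(i), $X$ is free.

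Finally, if $d=1$ this says $N=X$ is free, and if $d\ge 2$ then freeness of $X$ together with $\pd_R(Y)\le d-2$ gives $\pd_R(N)\le d-1<\infty$, so \ref{mat} and the hypothesis $\Ext^i_R(N,N)=0$ for $1\le i\le d$ force $\pd_R(N)=0$. I expect the main obstacle to be the $d\ge 2$ case of the middle step: one must replace $N$ by a totally reflexive module without destroying the vanishing of self-$\Ext$ in the top two degrees, which is exactly the purpose of Proposition~\ref{AR1}. The other points needing care are checking that all the hypotheses of~(ii) descend to $N_{\fp}$ in the first step, and running the implications ``vanishing of $\Ext^i(-,\omega_R)$ in the top degree(s) $\Rightarrow$ $\depth\ge\min\{2,d\}$ $\Rightarrow$ condition $(S_2)$'' in the third step, all of which use the Cohen-Macaulay-with-canonical-module hypothesis.
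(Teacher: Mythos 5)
Your proof is correct and follows essentially the same route as the paper: induction on $d$, reducing to the locally-free-on-the-punctured-spectrum case via the inductive hypothesis, passing to the totally reflexive approximation $X$ via Proposition~\ref{AR1}, translating $\Ext$-vanishing into $\depth_R(X\otimes_R X^\dagger)\ge 2$ via \ref{GT}, and concluding with condition~(i) and \ref{mat}. Two small slips worth flagging: in the $d=1$ case the implication ``maximal Cohen-Macaulay, hence totally reflexive'' is false over a general Cohen-Macaulay ring; what you actually want is that $\Gdim_R(N)<\infty$ together with $\depth_R(N)=d$ forces $\Gdim_R(N)=0$ by the Auslander--Bridger formula (\ref{G}(iii)). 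And in the application of \ref{GT} you assert that $X^\dagger$ is locally free on $\Spec(R)-\{\fm\}$, which need not hold unless $R$ is Gorenstein on the punctured spectrum; the correct disjunct to invoke is that $X$ itself (playing the role of $M$ in \ref{GT}) is locally free there, which you have already established. Neither slip affects the validity of the argument.
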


\begin{proof} Assume $R_{\fp}$ satisfies the condition in part (i) for all $\fp \in \Spec(R)$. Let $N$ be a torsion-free $R$-module with rank such that $\Gdim_R(N)<\infty$ and $\Ext_R^i(N, N) = 0$ for all $i=1, \ldots, d$. We proceed by induction on $d$ to prove that $N$ is free.

If $d=0$, then $N$ is free since it has rank. So we assume $d=1$. Then, since $N$ is totally reflexive and locally free on $\Spec(R)-\{\fm\}$, it follows from \ref{GT} that $\Ext^1_R(N,N)\cong \Ext_R^1(N\otimes_RN^{\dagger}, \omega_R)$. Therefore $\Ext_R^1(N\otimes_RN^{\dagger}, \omega_R)=0$. Hence $N \otimes_R N^\dagger$ is maximal Cohen-Macaulay and so it satisfies $(S_2)$. Consequently $N$ is free by the hypothesis.

Next assume $d\geq 2$. Then, by using the induction hypothesis, we may assume that $N$ is locally free on $\Spec(R)-\{\fm\}$. It follows from Proposition \ref{AR1} that there is an exact sequence $0 \to Y \to X \to N \to 0$,  where $X$ is totally reflexive and $\Ext^d_R(X,X)=\Ext^{d-1}_R(X,X)=0$. Note, by \ref{GT}, we have that $$\Ext^{i}_R(X\otimes_RX^{\dagger},\omega_R) \cong \Ext^{i}_R(X,X) \text{ for } i=d-1 \text{ and } i=d.$$ Hence \cite[3.5.11(b)]{BH} implies that $\depth_R(X\otimes_R X^{\dagger})\geq 2$. As $X$ is locally free on $\Spec(R)-\{\fm\}$, we see that $X\otimes_RX^{\dagger}$ satisfies $(S_2)$. So $X$ is free by the hypothesis (i). Consequently, as $\pd_R(Y)<\infty$, it follows that $\pd_R(N)<\infty$. Also, as $\Ext_R^i(N, N) = 0$ for all $i=1, \ldots, d$, we conclude by \ref{mat} that $N$ is free.
\end{proof}

If $R$ is a Noetherian integrally closed domain (not necessarily local) and $M$ is a torsion-free $R$-module such that $M\otimes_RM^{\ast}$ is reflexive, then Auslander \cite[3.3]{Au} proved that $M$ must be projective. In the local case, his argument in fact establishes the following result.


\begin{chunk} (Auslander \cite[3.3]{Au}; see also \cite[8.6]{CeRo}) \label{Aus} Let $R$ be a local ring satisfying $(S_2)$ and let $M$ be a torsion-free $R$-module such that $M\otimes_RM^{\ast}$ is reflexive. If $M_{\fp}$ is a free $R_{\fp}$-module for each prime ideal $\fp$ of $R$ of height at most one, then $M$ is free. \qed
\end{chunk}

We are now ready to give a proof of Theorem \ref{thm2}:

\begin{proof}[Proof of Theorem \ref{thm2}] Assume first condition (b) holds. Let $R$ be a one-dimensional Gorenstein local domain and let $N$ be a torsion-free $R$-module such that $N\otimes_RN^{\ast}$ is torsion-free. Then it follows from \ref{HJ} that $\Ext^1_R(N,N)=0$. Therefore, by our assumption, $N$ is free. This shows that Conjecture \ref{HWC} holds over each Gorenstein domain.

Next assume condition (a) holds, namely, Conjecture \ref{HWC} holds over each Gorenstein local domain. Let $R$ be a $d$-dimensional Gorenstein local domain, and let $M$ be a torsion-free $R$-module such that $\Ext^i_R(M,M)=0$ for all $i=1, \ldots, d$. We claim that $M$ is free.

Note that $\Gdim_R(M)<\infty$ as $R$ is Gorenstein. Therefore $M$ has all the properties stated in part (ii) of Proposition \ref{mainthmsec3}. Consequently, to show $M$ is free, it suffices to prove that $R_{\fp}$ satisfies the condition of part (i) of Proposition \ref{mainthmsec3} for all $\fp \in \Spec(R)$. 

For that let $\fp \in \Spec(R)$ and set $S=R_{\fp}$. Note that each module over $S$ has rank since $S$ is a domain. Let $X$ be a totally reflexive $S$-module such that $X \otimes_S X^{\ast}$ satisfies $(S_2)$. Then, since $S$ is Gorenstein, $X$ is a maximal Cohen-Macaulay $S$-module and $X \otimes_S X^{\ast}$ is reflexive. It suffices to show that $X$ is a free $S$-module. 

If $\dim(S)\leq 0$, then $S$ is a field and hence $X$ is free. If $\dim(S)=1$, then $X$ is free because it is assumed that Conjecture \ref{HWC} holds over each one-dimensional local Gorenstein domain and $S$ is such a ring. Next suppose $\dim(S)\geq 2$. Then, by the induction hypothesis on $\dim(S)$, we conclude that  $X_{\fq}$ is a free $S_{\fq}$-module for each prime ideal $\fq$ of $S$ of height at most one. So \ref{Aus} shows that $X$ is free.
\end{proof}

\appendix
\section{On a result of Araya}

In this appendix we state some byproducts of our argument that are not directly related to Conjecture \ref{HWC}. In particular, we prove Proposition \ref{corAson} which generalizes the following result of Araya:

\begin{chunk} (\cite[Corollary 10]{Ar}) \label{Tok} Let $R$ be a $d$-dimensional Gorenstein local ring, where $d\geq 1$, and let $M$ be an $R$-module. Assume the following hold: \begin{enumerate}[\rm(i)]
\item $M$ is locally free on $\Spec(R)-\{\fm\}$.
\item $M$ is maximal Cohen-Macaulay.
\item $\Ext^{d-1}_R(M,M)=0$. 
\end{enumerate}
Then $M$ is free. \qed
\end{chunk}

Araya's result \cite{Ar} shows that the Auslander-Reiten conjecture is straightforward over Gorenstein isolated singularities. It also implies that the conjecture holds over Gorenstein normal domains, a result initially proved by Huneke and Jorgensen \cite[5.9]{HJ}. Subsequently Kimura \cite{KK} removed the Gorenstein hypothesis, and proved that the Auslander-Reiten conjecture holds over arbitrary normal domains. In his paper Kimura also proved:

\begin{chunk} (\cite[2.8 and 2.10(3)]{KK}) \label{KK} Let $R$ be a local ring such that $d=\depth(R)\geq 1$, and let $X$ be an $R$-module such that $X_{\fp}$ is a free $R_{\fp}$-module for all  $\fp \in \Spec(R)-\{\fm\}$  and $\Ext^{d-1}_R(X,X)=0$. 
Then $X$ is free provided that at least one of the following holds:
\begin{enumerate}[\rm(i)]
\item $\Ext^{i}_R(X,R)=0$ for all $i=1, \ldots, 2d+1$.
\item $R$ satisfies $(S_2)$, $\Ext^{i}_R(X,R)=0$ for all $i=1, \ldots u$, and $\Ext^{i}_R(\Tr_R X,R)=0$ for all $i=1, \ldots v$ for some integers $u\geq 0$ and $v\geq 0$ such that $u+v=2d+1$. \qed
\end{enumerate}
\end{chunk}

Next we use Proposition \ref{AR1} and \ref{KK}(i), and give a proof for Proposition \ref{corAson} which is advertised in the introduction:

\begin{proof}[Proof of Proposition \ref{corAson}]  We may assume $d\geq 2$ and $M\neq 0$. As $M$ is torsion-free, it follows that $\depth_R(M)\geq 1$ and hence $\Gdim_R(M)\leq d-1$. Then Proposition \ref{AR1} gives an exact sequence $0 \to Y \to X \to M \to 0$, where $X$ is totally reflexive, $X$ is locally free on $\Spec(R)-\{\fm\}$, and $\Ext^{d-1}_R(X,X)=0$. Thus \ref{KK} implies that $X$ is free. Hence $\pd_R(M)\leq d-1$. Moreover, since $\Ext^{d-1}_R(M,M)=0$, we see from \ref{mat} that $\pd_R(M)\neq d-1$.
\end{proof}

In view of \ref{Mat}, a consequence of Proposition \ref{corAson} is:

\begin{cor} \label{corson1} Let $R$ be a $d$-dimensional local Gorenstein ring, with $d\geq 1$, and let $I$ be an $\fm$-primary ideal of $R$. Then $\Ext^{d-1}_R(I,I)\neq 0$. \qed
\end{cor}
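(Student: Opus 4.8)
The plan is to deduce Corollary~\ref{corson1} from Proposition~\ref{corAson}, so the first task is to verify that an $\fm$-primary ideal $I$ in a $d$-dimensional Gorenstein local ring with $d\geq 1$ satisfies all three hypotheses of that proposition. Hypothesis (iii) is automatic: $R$ is Gorenstein, so $\Gdim_R(I)<\infty$ by \ref{G}(i). For hypothesis (ii), note $I$ is an ideal of a domain---wait, $R$ need not be a domain here, but $R$ is Gorenstein hence Cohen-Macaulay, in particular $\fm\notin\Ass(R)$ when $d\geq 1$, so $\fm$ contains a non zero-divisor; since $I$ is $\fm$-primary it too contains a non zero-divisor of $R$ (otherwise $I\subseteq \fm$ would force $I$ inside a union of associated primes, none of which is $\fm$-primary), and hence $I$ is torsion-free. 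For hypothesis (i), since $I$ is $\fm$-primary, for every $\fp\in\Spec(R)-\{\fm\}$ we have $\fp\not\supseteq I$, so $I_\fp=R_\fp$ is free, in particular of finite projective dimension over $R_\fp$.

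With all hypotheses of Proposition~\ref{corAson} in place, I would argue by contradiction. Suppose $\Ext^{d-1}_R(I,I)=0$. Then Proposition~\ref{corAson} yields $\pd_R(I)\leq d-2$. In particular $\pd_R(I)<\infty$. Now invoke \ref{mat} (the fact that a nonzero module of finite projective dimension $n$ has $\Ext^n$ with itself nonvanishing): actually the cleaner route is via the Auslander--Buchsbaum formula together with depth considerations. Since $I$ is $\fm$-primary it is a module of dimension $d$, hence $\depth_R(I)\leq d$; but more to the point, $\pd_R(I)\leq d-2$ gives $\pd_R(R/I)\leq d-1$ by the short exact sequence $0\to I\to R\to R/I\to 0$. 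On the other hand $R/I$ has finite length, so $\depth_R(R/I)=0$, and Auslander--Buchsbaum forces $\pd_R(R/I)=\depth(R)=d$. This contradicts $\pd_R(R/I)\leq d-1$.

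The role of \ref{Mat} referenced just before the corollary is presumably exactly this Auslander--Buchsbaum/depth argument packaged as: a finite-length module of finite projective dimension over a $d$-dimensional (Cohen-Macaulay) local ring has projective dimension exactly $d$. So the write-up reduces to: (1) check the three hypotheses of Proposition~\ref{corAson} for an $\fm$-primary $I$, the only mild point being torsion-freeness, which follows since $\fm$ and hence $I$ contains a non zero-divisor when $d\geq1$; (2) apply the proposition to get $\pd_R(I)\leq d-2$; (3) pass to $R/I$, which has finite length hence depth $0$, so $\pd_R(R/I)=d$ by Auslander--Buchsbaum, contradicting $\pd_R(R/I)\leq d-1$.

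The main obstacle I anticipate is purely bookkeeping rather than conceptual: one must be careful that Proposition~\ref{corAson} is stated for rings with $\depth(R)=d\geq 1$ (not necessarily Gorenstein), and here $d=\dim(R)=\depth(R)$ because $R$ is Cohen-Macaulay, so the indexing matches; and one must make sure the case $d=1$ is handled (there $\pd_R(I)\leq -1$ means $I=0$, impossible, so the contradiction is immediate and in fact $\Ext^0_R(I,I)=\Hom_R(I,I)\neq0$ trivially---but the statement $\Ext^{d-1}_R(I,I)=\Ext^0_R(I,I)\neq0$ for $d=1$ is obvious anyway). I would double-check that the proposition's conclusion $\pd_R(M)\leq d-2$ is vacuously strong enough in the boundary case and otherwise the $R/I$ argument closes it.
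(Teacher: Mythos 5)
Your proposal is correct and matches the paper's intended (very terse) derivation: verify the hypotheses of Proposition~\ref{corAson}, assume $\Ext^{d-1}_R(I,I)=0$ to get $\pd_R(I)\leq d-2$, and derive the contradiction from $\pd_R(R/I)=\depth(R)=d$ by Auslander--Buchsbaum since $R/I$ has finite length; this is exactly the way \ref{mat} (the lowercase label, which the text almost certainly means by ``\ref{Mat}'') is meant to close the argument. One small correction: an ideal $I\subseteq R$ is torsion-free simply because it is a submodule of the torsion-free module $R$ (equivalently $\Ass_R(I)\subseteq\Ass(R)$); the observation that $I$ contains a non zero-divisor is a different fact and does not, by itself, give torsion-freeness.
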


In general we do not know whether or not $M$ must be free if $R$ is a $d$-dimensional Gorenstein local domain and $M$ is a torsion-free $R$-module such that $\Ext^i_R(M,M)=0$ for all $i=1, \ldots, d$; this is the context of Theorem~\ref{thm2}. However, if $M$ is locally free on the set $\X^1(R)$ of all prime ideals $\fp$ of $R$ with $\dim(R_{\fp})\leq 1$, then the vanishing of $\Ext^i_R(M,M)$ for all $i=1, \ldots, d-1$ is sufficient to conclude that $M$ is free. We record this observation which also extends \cite[1.6]{ACST} for the case where $n=1$.

\begin{cor} \label{conse} Let $R$ be a $d$-dimensional local ring satisfying $(S_1)$, where $d\geq 1$, and let $M$ be an $R$-module. Assume the following hold:
\begin{enumerate}[\rm(i)]
\item $M_{\fp}$ is free over $R_{\fp}$ for all $\fp \in \X^1(R)$.
\item $M$ is torsion-free. 
\item $\Ext_R^{i}(M, M) = 0$ for $i = 1, \ldots, d-1$.
\end{enumerate} 
If $\Gdim_R(M)<\infty$, then $M$ is free.
\end{cor}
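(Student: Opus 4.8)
The plan is to reduce to Proposition \ref{corAson} by localizing at height-one primes and then inducting on dimension, exactly in the spirit of how Theorem \ref{thm2} was deduced from Proposition \ref{mainthmsec3}. First I would dispose of trivial cases: if $d=1$ there is nothing to prove since then the hypothesis on $\X^1(R)$, together with $M$ torsion-free and $(S_1)$, already forces $M$ to be free (every associated prime of $R$ lies in $\X^1(R)$, and $M_\fp$ free there plus the natural injection $M\into M^{**}$ handled as in \ref{Aus}). For $d\geq 2$, the goal is to promote the local freeness on $\X^1(R)$ to local freeness on all of $\Spec(R)-\{\fm\}$, and then apply the result of Araya in the generalized form \ref{Tok}/Proposition \ref{corAson}.

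The key steps, in order, are as follows. First, for each prime $\fp$ with $\dim(R_\fp)\leq d-1$, localize: $M_\fp$ is a torsion-free $R_\fp$-module of finite Gorenstein dimension (by \ref{G}(iv)) with $\Ext^i_{R_\fp}(M_\fp,M_\fp)=0$ for $i=1,\ldots,d-1$, hence in particular for $i=1,\ldots,\dim(R_\fp)-1$; moreover $R_\fp$ again satisfies $(S_1)$ and $M_\fp$ is free on $\X^1(R_\fp)$. By Noetherian induction on the dimension (the case $\dim(R_\fp)\leq 1$ being the base case handled above, and $\dim(R_\fp)\leq d-1<d$ for $\fp\neq\fm$), we conclude $M_\fp$ is $R_\fp$-free for all $\fp\in\Spec(R)-\{\fm\}$; that is, $M$ is locally free on the punctured spectrum. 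Second, now that $M$ is locally free on $\Spec(R)-\{\fm\}$ and has finite Gorenstein dimension, apply Proposition \ref{corAson} with its hypothesis $\Ext^{d-1}_R(M,M)=0$ (which holds, as $d-1$ is among $1,\ldots,d-1$): this yields $\pd_R(M)\leq d-2$. Third, feed this back: if $\pd_R(M)=n$ with $0<n\leq d-2$, then $\Ext^n_R(M,M)\neq 0$ by \ref{mat}, contradicting $\Ext^i_R(M,M)=0$ for $i=1,\ldots,d-1$; hence $n=0$, i.e. $M$ is free.

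The main obstacle is arranging the induction cleanly so that localization at a non-maximal prime genuinely lands in a smaller instance of the same statement. The subtle point is that Proposition \ref{corAson} requires $\Ext^{d-1}$ to vanish where $d$ is the \emph{depth} of the ambient ring; after localizing at $\fp$ the relevant exponent drops to $\dim(R_\fp)-1$ (using $(S_1)$ one has $\depth R_\fp\geq\min\{1,\dim R_\fp\}$, and one should note $R$ Cohen-Macaulay is not assumed, only $(S_1)$—so one must be a little careful about whether $\depth R_\fp$ equals $\dim R_\fp$). The comparison $\dim(R_\fp)-1\leq d-1$ is what makes the inductive hypothesis applicable, but one should verify that the torsion-free hypothesis on $M$ localizes correctly and that $\Gdim_{R_\fp}(M_\fp)<\infty$ via \ref{G}(iv); once these bookkeeping matters are in place, the argument is a routine localization-plus-induction, with the real content borrowed from Proposition \ref{corAson} and \ref{mat}.
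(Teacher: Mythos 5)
Your overall plan matches the paper's proof: induct on $d$ using that $(S_1)$ localizes, torsion-freeness localizes, Ext-vanishing localizes, and G-dimension localizes (via \ref{G}(iv)) to obtain local freeness on the punctured spectrum; then invoke Proposition \ref{corAson} to bound the projective dimension, and finish with \ref{mat}. This is exactly how the paper argues.

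Two remarks on details. First, your justification of the base case $d=1$ is off-track: you reach for Auslander's result \ref{Aus}, which requires $(S_2)$ and reflexivity of $M\otimes_R M^*$ — neither of which is given here. The correct observation is much more direct: when $d=1$ the maximal ideal $\fm$ itself belongs to $\X^1(R)$ (since $\dim R_{\fm}=1$), so hypothesis (i) literally says $M=M_{\fm}$ is free, and there is genuinely ``nothing to prove.'' Second, you correctly flag the depth-versus-dimension subtlety in applying Proposition \ref{corAson}: that proposition uses $\depth(R)$ where the corollary's indexing uses $\dim(R)$, so one needs $1\leq\depth(R)-1\leq d-1$ for the hypothesis $\Ext^{\depth(R)-1}_R(M,M)=0$ to follow from $\Ext^i_R(M,M)=0$ for $1\leq i\leq d-1$. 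The paper's proof also passes over this silently (it writes ``$\pd_R(M)\leq\depth(R)-2$'' and then concludes by \ref{mat}); both arguments implicitly need $\depth(R)\geq 2$ once $d\geq 2$, which $(S_1)$ alone does not force. So you have identified a genuine soft spot, but your sketch, like the paper's proof, does not resolve it.
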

\begin{proof} There is nothing to prove if $d=1$. Assume $d\geq 2$. Then $M_{\fp}$ is torsion-free over $R_{\fp}$ since $R$ satisfies $(S_1)$ \cite[3.8]{YYNE}. Hence, by the induction hypothesis on $d$, we have that $M_{\fp}$ is free over $R_{\fp}$ for all $\fp \in \Spec(R)-\{\fm\}$. So Proposition \ref{corAson} shows that $\pd_R(M)\leq \depth(R)-2$. As $\Ext_R^{i}(M, M) = 0$ for $i = 1, \ldots, d-1$, we conclude that $M$ is free; see \ref{mat}.
\end{proof}

We prove an analog of Proposition \ref{corAson} by replacing the finite Gorenstein dimension hypothesis on $M$ with the assumption of the vanishing of $\Ext^i_R(M,R)$ for $i=1, \ldots, d$ if $R$ satisfies $(S_2)$ and $M$ is locally free on $\Spec(R)-\{\fm\}$; see Corollary \ref{corneww}. This follows from a more general result which we prove as Proposition \ref{mt}; see also \cite[3.14]{HN}. In the proof we use \ref{KK}(ii) and the following auxiliary result:


\begin{chunk}\label{l1} Let $R$ be a local ring, $M$ and $N$ be $R$-modules, and let $n\geq 1$. Assume $\Ext^{n+1}_R(M,R)=0$. Then the canonical map $\Ext^{n}_R(M,N)\to \Ext^{n}_R(\Omega M,\Omega N)$ induced by the syzygy sequence is surjective. Moreover, if $\Ext^{n}_R(M,R)=0$, then $\Ext^{n}_R(M,N) \cong \Ext^{n}_R(\Omega M,\Omega N)$; see, for example, \cite[2.6]{KK}.
\end{chunk}

\begin{prop} \label{mt} Let $R$ be a local ring, with $d=\depth(R)$, $M$ be an $R$-module, and let $n\geq 0$ be an integer such that $d\geq n+1$. Assume the following hold:
\begin{enumerate}[\rm(i)]
\item $R$ satisfies $(S_2)$.
\item $M_{\fp}$ is a free $R_{\fp}$-module for all $\fp \in \Spec(R)-\{\fm\}$ and $\depth_R(M)\geq n$. 
\item $\Ext^{d-1}_R(M,M)=0$. 
\item $\Ext^{i}_R(M,R)=0$ for $\min\{d,d-n+2\}\leq i\leq 2d-n+1$.
\end{enumerate}
Then $\pd_R(M)<\infty$. 
\end{prop}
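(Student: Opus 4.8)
The plan is to pass to a suitable high syzygy $N$ of $M$, reduce the statement to ``$N$ is free'', and then verify the hypotheses of Kimura's criterion \ref{KK}(ii) for $N$, using the syzygy-shift lemma \ref{l1} to carry the Ext-vanishing hypotheses along. First the trivial reductions: we may assume $M\neq 0$, and if $d=1$ then $n=0$ and (iii) reads $\Hom_R(M,M)=0$, forcing $M=0$, so we may assume $d\geq 2$. I would set $N:=\Omega_R^{\,d-n+1}M$. Since $\depth_RM\geq n$, iterating the depth lemma along the syzygy sequences gives $\depth_RN\geq d$; because $M$ is locally free on $\Spec R\setminus\{\fm\}$, so is $N$; and, being a torsionless module of depth $\geq 2$ that is locally free on the punctured spectrum, $N$ is reflexive (the cokernel of $N\hookrightarrow N^{**}$ has finite length but depth $\geq 1$, hence is zero). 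As $\depth_RN\geq d=\depth R$, Auslander--Buchsbaum gives $\pd_RM<\infty\iff N$ is free, so it suffices to prove that $N$ is free.

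To invoke \ref{KK}(ii) for $N$ with $u=d$ and $v=d+1$ (so $u+v=2d+1$), I would check three things. (a) $\Ext^{d-1}_R(N,N)=0$: apply the surjectivity half of \ref{l1} $(d-n+1)$ times, starting from $\Ext^{d-1}_R(M,M)=0$; the $j$-th step only requires $\Ext^d_R(\Omega_R^{\,j}M,R)\cong\Ext^{d+j}_R(M,R)=0$ for $j=0,\dots,d-n$, and these degrees $d,\dots,2d-n$ lie in the range $[\min\{d,d-n+2\},\,2d-n+1]$ of hypothesis (iv). (b) $\Ext^i_R(N,R)\cong\Ext^{\,i+d-n+1}_R(M,R)=0$ for $i=1,\dots,d$, i.e.\ $\Ext^j_R(M,R)=0$ for $j=d-n+2,\dots,2d-n+1$, which is exactly the (upper end of the) range in (iv). (c) $\Ext^i_R(\Tr_RN,R)=0$ for $i=1,\dots,d+1$: for $i=1,2$ this is the reflexivity of $N$, and for $i\geq 3$, via the Auslander--Bridger isomorphism $\Omega_R^2\Tr_RN\cong N^{*}$ (modulo free summands), the claim reduces to $\Ext^j_R(N^{*},R)=0$ for $j=1,\dots,d-1$.

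For (c) I would run a dévissage of $N^{*}=(\Omega_R^{\,d-n+1}M)^{*}$ down through $(\Omega_R^{\,d-n}M)^{*},(\Omega_R^{\,d-n-1}M)^{*},\dots$: at each stage one dualizes a syzygy sequence $0\to\Omega_R^{\,t+1}M\to F\to\Omega_R^{\,t}M\to 0$, producing a four-term sequence whose ``error term'' is $\Ext^1_R(\Omega_R^{\,t}M,R)\cong\Ext^{\,t+1}_R(M,R)$. This error term is either killed by (iv) or has finite length (as $M$ is locally free on the punctured spectrum) and hence has vanishing $\Ext^{<d}_R(-,R)$; since the degree range one needs at each stage stays strictly below $d$, the error terms never interfere, and using the reflexivity of the intervening syzygies $\Omega_R^{\,t}M$ (depth $\geq 2$ and locally free on the punctured spectrum) one peels off one degree per stage. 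Once (a)--(c) are established, \ref{KK}(ii) forces $N$ to be free, and we are done.

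The main obstacle will be step (c): one must track the dévissage precisely enough to see that the lower bound $\min\{d,d-n+2\}$ in (iv) supplies exactly the vanishing of $\Ext^\bullet_R(M,R)$ needed at each stage, and --- in the cases where the dévissage bottoms out at $M$ itself rather than at an empty degree range (which happens when $n\geq 3$, in particular in higher dimension) --- one must separately establish the residual vanishing $\Ext^i_R(M^{*},R)=0$ for $i=1,\dots,n-2$, presumably by combining the reflexivity of $M$ (so that $\Ext^i_R(M^{*},R)\cong\Ext^{\,i+2}_R(\Tr_RM,R)$ with $\Ext^1_R(\Tr_RM,R)=\Ext^2_R(\Tr_RM,R)=0$) with the finite length of the low-degree $\Ext^\bullet_R(M,R)$. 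The degenerate small cases, notably $d=2$ (where some syzygies have depth only $1$), should also be checked by hand.
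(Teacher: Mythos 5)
Your overall strategy matches the paper exactly: set $X=\Omega_R^{d-n+1}M$, carry the vanishing of $\Ext^{d-1}_R(M,M)$ up to $X$ via \ref{l1} (this needs precisely $\Ext^i_R(M,R)=0$ for $i=d,\ldots,2d-n$, which you correctly extract from (iv)), read off $\Ext^i_R(X,R)=0$ for $i=1,\ldots,d$ from (iv), and then feed $X$ into Kimura's criterion \ref{KK}(ii) with $(u,v)=(d,d+1)$. Your steps (a) and (b) are the paper's (\ref{mt}.2) and (\ref{mt}.3) verbatim.

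Where you diverge, and where there is a genuine gap, is step (c): establishing $\Ext^i_R(\Tr_R X,R)=0$ for $i=1,\ldots,d+1$. You propose to check $i=1,2$ via reflexivity of $X$ and then, via $\Omega_R^2\Tr_RX\cong X^*$, to prove $\Ext^j_R(X^*,R)=0$ for $j\leq d-1$ by a d\'evissage dualizing syzygy sequences. You acknowledge that this d\'evissage is the ``main obstacle'' and leave it unexecuted; moreover, once the d\'evissage bottoms out below the range of (iv) you are asking for vanishing of $\Ext^j_R(\Tr_RM,R)$ for $j$ up to $n$, i.e.\ that $M$ be $n$-torsionfree in the Auslander--Bridger sense -- and being an $n$th syzygy that is locally free on the punctured spectrum does \emph{not} by itself yield $n$-torsionfree in the absence of finite G-dimension, so this is not a formality. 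The paper avoids all of this with two clean citations: by Dibaei--Sadeghi (\cite[2.4]{ArMo}), a module locally free on $\Spec(R)\setminus\{\fm\}$ with $\depth_RM\geq n$ is an $n$th syzygy, whence $X=\Omega_R^{d-n+1}M$ is a $(d+1)$st syzygy locally free on the punctured spectrum; and then Ma\c{s}iek's theorem (\cite[Thm.~43]{Vla}) gives exactly $\Ext^i_R(\Tr_RX,R)=0$ for $i=1,\ldots,d+1$ in one stroke. I would replace your d\'evissage with these two results; without them your step (c) is an unverified sketch, not a proof.
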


\begin{proof} The case where $d=1$ is trivial since we assume $\Ext^{d-1}_R(M,M)=0$. Hence we assume $d\geq 2$. 

Note, since $d\geq \min\{d,d-n+2\}$, part (iv) implies:
\begin{equation}\tag{\ref{mt}.1}
\Ext^{i}_R(M,R)=0 \text{ for } i = d,\ldots, 2d-n.
\end{equation}
Therefore, in view of (\ref{mt}.1), the observation recorded in \ref{l1} yields:
\begin{equation}\notag{}
\Ext^{d-1}_R(M,M) \twoheadrightarrow \Ext^{d-1}_R(\Omega_R M,\Omega_R M)  \iso \Ext^{d-1}_R(\Omega_R^2 M,\Omega_R^2 M) \iso\cdots\iso \Ext^{d-1}_R(\Omega_R^{d-n+1} M,\Omega_R^{d-n+1} M). 
\end{equation}
As $\Ext^{d-1}_R(M,M)=0$, we conclude that $\Ext^{d-1}_R(\Omega_R^{d-n+1} M,\Omega_R^{d-n+1} M)=0$. We set $X=\Omega_R^{d-n+1} M$. Then it follows that 
\begin{equation}\tag{\ref{mt}.2}
\Ext^{d-1}_R(X,X)=0. 
\end{equation}

We deduce from part (iv) that: 
\begin{equation}\tag{\ref{mt}.3}
\Ext^{i+d-n+1}_R(M,R)\cong \Ext^{i}_R(X, R)=0 \text{ for all } i=1, \ldots, d.
\end{equation}

As $M_{\fp}$ is a free $R_{\fp}$-module for all $\fp \in \Spec(R)-\{\fm\}$ and $\depth_R(M)\geq n$, it follows that $M$ is an $n$th syzygy module; see \cite[2.4]{ArMo}. 
So $X$ is a $(d+1)$st syzygy module. Moreover $X_{\fp}$ is a free $R_{\fp}$-module for all  $\fp \in \Spec(R)-\{\fm\}$ since so is $M_{\fp}$. Thus \cite[Thm. 43]{Vla} implies that 
\begin{equation}\tag{\ref{mt}.4}
\Ext^{i}_R(\Tr_R X, R)=0 \text{ for all } i=1, \ldots, d+1. 
\end{equation}
Finally we make use of \ref{mt}.2), (\ref{mt}.3) and (\ref{mt}.4), and conclude from Kimura's result \ref{KK}(ii) that $X$ is free. Consequently $\pd_R(M)<\infty$. 
\end{proof}

Next is a corollary of Proposition \ref{mt} which establishes a variation of Proposition \ref{corAson}.

\begin{cor} \label{corneww} Let $R$ be a local ring satisfying $(S_2)$, with $d=\depth(R)\geq 1$, and let $M$ be an $R$-module.
Assume the following hold:
\begin{enumerate}[\rm(i)]
\item $M_{\fp}$ is a free $R_{\fp}$-module for all $\fp \in \Spec(R)-\{\fm\}$.
\item $M$ is torsion-free.
\item $\Ext^{d-1}_R(M,M)=0$. 
\end{enumerate}
If $\Ext^{i}_R(M,R)=0$ for all $i=d, \ldots, 2d+1$, then $\pd_R(M)\leq d-2$.
\end{cor}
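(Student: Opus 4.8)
The plan is to deduce Corollary~\ref{corneww} directly from Proposition~\ref{mt}, applied with $n=1$, and then sharpen the resulting finiteness of projective dimension to the bound $\pd_R(M)\le d-2$ by an Auslander--Buchsbaum argument.

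First I would reduce to the case $d\ge 2$ and $M\ne 0$: if $M=0$ there is nothing to prove, and if $d=1$ the hypothesis $\Ext^{d-1}_R(M,M)=\Hom_R(M,M)=0$ already forces $M=0$. So assume $d\ge 2$ and $M\ne 0$. Then I would check that the hypotheses of Proposition~\ref{mt} hold with $n=1$. Hypothesis (i) there, that $R$ satisfies $(S_2)$, is assumed. For (ii): $M_\fp$ is free over $R_\fp$ for $\fp\in\Spec(R)-\{\fm\}$ by assumption, and since $M$ is torsion-free we have $\Ass_R(M)\subseteq\Ass(R)$; as $\depth(R)=d\ge 1$ gives $\fm\notin\Ass(R)$, it follows that $\depth_R(M)\ge 1=n$. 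Hypothesis (iii) is precisely assumption (iii) of the corollary. For (iv), observe that for $n=1$ one has $\min\{d,\,d-n+2\}=d$ and $2d-n+1=2d$, so (iv) only demands $\Ext^i_R(M,R)=0$ for $d\le i\le 2d$, which is contained in the assumed vanishing range $d\le i\le 2d+1$. Proposition~\ref{mt} then gives $\pd_R(M)<\infty$.

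To finish, I would invoke the Auslander--Buchsbaum formula: $\pd_R(M)=\depth(R)-\depth_R(M)=d-\depth_R(M)\le d-1$, using $\depth_R(M)\ge 1$ once more. Were $\pd_R(M)=d-1$, then \ref{mat} would yield $\Ext^{d-1}_R(M,M)\ne 0$, contradicting assumption (iii); hence $\pd_R(M)\le d-2$.

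There is no serious obstacle here, since Proposition~\ref{mt} carries the analytic weight; the only thing to watch is the index bookkeeping in the last two paragraphs---verifying that the interval $[\min\{d,d-n+2\},\,2d-n+1]$ from Proposition~\ref{mt}(iv) specializes at $n=1$ to a subinterval of $[d,2d+1]$, and that torsion-freeness of $M$ over a ring of positive depth delivers the inequality $\depth_R(M)\ge n=1$ needed for hypothesis (ii).
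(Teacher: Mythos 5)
Your proposal is correct and follows essentially the same route as the paper's proof: both reduce to Proposition~\ref{mt} to obtain $\pd_R(M)<\infty$, then use $\depth_R(M)\geq 1$ together with \ref{mat} (nonvanishing of $\Ext^{\pd}$) to sharpen to $\pd_R(M)\leq d-2$. The only minor cosmetic difference is that the paper phrases the depth condition in terms of $M$ satisfying $(\widetilde S_1)$ and leaves the choice of $n$ implicit, whereas you explicitly take $n=1$ and therefore must (and do) dispose of the case $d=1$ separately because Proposition~\ref{mt} requires $d\geq n+1$.
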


\begin{proof} Let $\fp$ be a prime ideal of $R$. Then, as $R$ satisfies $(S_1)$, $M_{\fp}$ is torsion-free over $R_{\fp}$. Hence, if $\depth(R_{\fp})\geq 1$, then $\depth_{R_{\fp}}(M_{\fp})\geq 1$. This shows that $M$ satisfies $(\widetilde{S}_1)$. So $\pd_R(M)< \infty$ by Proposition \ref{mt}. Also, since $\depth_R(M)\geq 1$ and $\Ext^{d-1}_R(M,M)=0$, it follows that $\pd_R(M)\leq d-2$; see \ref{Mat}.
\end{proof}

\section*{acknowledgements}
The authors thank Uyen Le and Brian Laverty for their comments on the manuscript; to Le for discussions on the proof of Theorem \ref{thm2}; to Mohsen Asgarzadeh for discussions about \ref{preserve}(b); and to Yongwei Yao for his suggestions about \ref{corFrob} and \ref{preserve}(b). 

Part of this work was completed when Kobayashi visited WVU  in September 2022 and March 2023. He is grateful for the kind hospitality of the WVU School of Mathematical and Data Sciences.

\bibliographystyle{amsplain}

\end{document}